\newtheorem{thm}{Theorem}[section]
\newtheorem{thmintro}{Theorem}[]
\newtheorem{lemma}[thm]{Lemma}
\newtheorem{prop}[thm]{Proposition}
\newtheorem{conj}[thm]{Conjecture}
\newtheorem{cor}[thm]{Corollary}
\theoremstyle{definition}
\newtheorem{rem}[thm]{Remark}}
\newcommand{\C}{\mathbb{C}}
\newcommand{\CC}{\mathcal{C}}
\newcommand{\CP}{\mathbb{C}P}
\newcommand{\RP}{\mathbb{R}P}
\newcommand{\R}{\mathbb{R}}
\newcommand{\Z}{\mathbb{Z}}
\newcommand{\Q}{\mathcal{Q}}
\newcommand{\x}{\underline{x}}
\newcommand{\y}{\underline{y}}
\renewcommand{\epsilon}{\varepsilon}
\newcommand{\N}{\mathcal{N}}
\newcommand{\Pic}{\text{Pic}}
\begin{document}

\title{Pencils of quadrics and Gromov-Witten-Welschinger invariants of $\CP^3$}

\author{Erwan Brugall\'e}
\address{Erwan Brugall\'e, CMLS, École polytechnique, CNRS, Université Paris-Saclay, 91128 Palaiseau Cedex, France}
\email{erwan.brugalle@math.cnrs.fr}

\author{Penka Georgieva}
\address{Penka Georgieva, Institut de Math\'ematiques de Jussieu - Paris Rive Gauche,
Universit\'e Pierre et Marie Curie, 
4 place Jussieu,
75252 Paris Cedex 5,
France}

\email{penka.georgieva@imj-prg.fr}

\subjclass[2010]{Primary 14P05, 14N10; Secondary 14N35, 14P25}
\keywords{Real enumerative geometry, Welschinger invariants,
Gromov-Witten invariants}
\thanks{P.G.  is supported by ERC grant STEIN-259118}

\begin{abstract} We establish a formula for the Gromov-Witten-Welschinger invariants of $\CP^3$ with mixed real and conjugate point constraints. The method is based on a suggestion by
J. Koll\'ar   that, considering pencils of quadrics, 
some real and complex enumerative
invariants of $\CP^3$ could be computed in terms of  enumerative invariants
of 
$\CP^1\times\CP^1$ and  of  elliptic curves.  \end{abstract}
\maketitle

Invariant signed counts of real rational curves with point constraints
in real surfaces and in many real threefolds were first defined by
J.-Y. Welschinger in \cite{Wel1} and \cite{Wel2}, respectively.  In the case
of surfaces, various methods for computation of these invariants are
established  \cite{Mik1, Sh8,Wel4, Br6b,HorSol12,IKS13,KhaRas13,Bru14}. In the case of threefolds,
methods for computation are developed only in the  extremal cases:
when all point constraints are real \cite{Br7}, when 
the number of real point constraints is minimal
\cite{Wel4,GeorZin13}, and
in the case of no constraints \cite{Sol08}. In this paper, we establish
a relation between the GWW invariants of $\CP^3$ and the GWW
invariants of $\CP^1\times \CP^1$, which allows the computation of the
former in terms of the latter. To our knowledge, Theorem
\ref{thm:main} provides the 
first systematic 
explicit 
computation of the Welschinger invariants of~$\CP^3$, and
more generally of a real algebraic variety of dimension 3,  when mixed real and conjugate point constraints are used.\\

Throughout the text, we equip $\CP^3$ and $\CP^1$ with their standard
real structure defined by the complex conjugation. We equip
$\CP^1\times \CP^1$ with the product of the  standard
real structure of $\CP^1$, in particular its real part is $\RP^1\times\RP^1$.
\\

Given  $d\in\Z_{>0}$ and $\x$ a generic real configuration of $2d$ points in
$\CP^3$, with $2l$ non-real points, 
we denote by $W_{\R P^3}(d,l)$ the corresponding Welschinger
 invariant counting with a sign the real rational curves of
degree $d$ in $\CP^3$ passing through all points in $\x$. Similarly,
given  $a,b\in\Z_{\ge 0}$ and $\x$ a generic real configuration of $2(a+b)-1$ points in
$\CP^1\times\CP^1$, with $2l$  non-real points, 
we denote by $W_{\RP^1\times\RP^1}((a,b),l)$ the Welschinger invariant counting with a sign the real rational curves of
 bidegree $(a,b)$ in $\CP^1\times \CP^1$ passing through all points in
$\x$.

\begin{thmintro}\label{thm:main}
For every odd positive integer $d$ and $l\in\{0,\ldots, d-1\}$, one has
$$W_{\R P^3}(d,l)=\sum_{\substack{a+b=d\\ 0\le a<b}} (-1)^a (d-2a) W_{\R P^1\times\R  P^1}\left((a,b),l\right).$$
\end{thmintro}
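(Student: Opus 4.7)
The guiding observation is that a rational curve of degree $d$ contained in a smooth real quadric $Q\subset\CP^3$ corresponds, under an identification $Q\cong\CP^1\times\CP^1$ compatible with the real structures, to a real rational curve of bidegree $(a,b)$ on $\CP^1\times\CP^1$ with $a+b=d$. The sum on the right-hand side of the formula is therefore naturally indexed by such bidegrees, and the plan is to reduce to this setting via a real degeneration argument inspired by Koll\'ar's pencil-of-quadrics idea.

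Concretely, the first step is to construct a generic real one-parameter family $\x_t$ of point configurations with $\x_0$ supported on a real smooth quadric $Q_0$ with real part $\RP^1\times\RP^1$. By invariance of $W_{\RP^3}(d,l)$ along a real generic path, one computes the Welschinger invariant as the limit signed count of real rational curves through $\x_t$ as $t\to 0$. The second step is to identify which stable maps occur in the limit and with what multiplicities. A real transversality argument shows that only rational curves contained in $Q_0$ contribute to the signed count; the oddness of $d$ ensures that every bidegree splitting $d=a+b$ is strictly asymmetric, avoiding a factor $1/2$ and allowing a clean labeling with $a<b$. The pencil of quadrics through suitable subsets of $\x_0$ then provides the deformation-theoretic framework for organizing the contributions: the transverse direction at a quadric-contained curve corresponds to a motion within the pencil, whose base locus is the elliptic curve appearing in the abstract.

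For each bidegree $(a,b)$ with $a+b=d$ and $a<b$, the contribution of quadric-contained limits to $W_{\RP^3}(d,l)$ is then computed as a multiple of $W_{\RP^1\times\RP^1}((a,b),l)$. The multiplicity $d-2a=b-a$ is expected to arise from an excess intersection computation, essentially as the degree of a normal-bundle section counting transverse deformations of the limit curve off $Q_0$ within $\CP^3$; the sign $(-1)^a$ is expected to come from comparing the Welschinger signs of a real rational curve viewed in $\CP^3$ versus in $\CP^1\times\CP^1$, the discrepancy being governed by the real structure of the normal bundle $N_{Q_0/\CP^3}\cong\mathcal{O}_{Q_0}(2)$ restricted to the curve. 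The main obstacle is precisely this local analysis of the real limit: establishing the transversality/parity statement ruling out non-contained stable maps, and extracting both the numerical factor $d-2a$ and the sign $(-1)^a$ from the geometry of the degeneration near each contributing stable map.
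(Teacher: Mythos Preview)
Your setup has a genuine gap: placing the configuration $\x_0$ on a single smooth quadric $Q_0$ does not force the degree-$d$ rational curves through $\x_0$ to lie in $Q_0$, even in the limit. A degree-$d$ curve not contained in $Q_0$ meets $Q_0$ in exactly $2d$ points by B\'ezout, so the condition ``passes through $2d$ prescribed points of $Q_0$'' is no more restrictive for curves in $\CP^3$ than for a general configuration; for generic $\x_0\subset Q_0$ the evaluation map is still regular and all $GW_{\CP^3}(d)$ preimages lie off $Q_0$. Conversely, rational curves of bidegree $(a,b)$ \emph{in} $Q_0$ move in a $(2d-1)$-dimensional family, so through $2d$ general points of $Q_0$ there are none at all. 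Hence your claimed transversality statement---that only $Q_0$-contained curves contribute---is false as stated, and there is no excess-intersection situation producing the factor $d-2a$.

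The paper's argument is organized differently and this difference is essential. The configuration $\x$ is placed not on a quadric but on a non-degenerate real elliptic curve $C_0\subset\CP^3$ of degree~$4$, which is the base locus of an entire real pencil $\Q$ of quadrics. Koll\'ar's result (Proposition~\ref{prop:kollar}) then says that every rational curve of degree $d$ through a generic $\x\subset C_0$ lies in \emph{some} quadric of $\Q$, and identifies exactly which quadrics occur: they correspond to solutions $E\in\Pic_2(C_0)$ of $(d-2a)E=(d-a)h-\x$. The factor $d-2a$ is thus not an excess multiplicity but the number of \emph{real} solutions of this equation, i.e.\ the number of real $(d-2a)$-torsion points on $C_0$ (Section~\ref{sec:elliptic}); this is also where the oddness of $d$ enters. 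The transversality input (Propositions~\ref{prop:generic quadric} and~\ref{prop:balanced}) shows that for generic $\x\subset C_0$ every curve is a balanced immersion and each contributing quadric carries exactly $W_{\RP^1\times\RP^1}((a,b),l)$ signed real curves. You are right that $(-1)^a$ comes from comparing the two Welschinger signs; in the paper this is Corollary~\ref{prop:W2-W3}, obtained by analyzing how the normal bundle of the curve in the quadric sits inside its normal bundle in $\CP^3$ (Section~\ref{sec:quadric}).
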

By symmetry reasons, 
G. Mikhalkin proved that $W_{\R P^3}(d,l)=0$ for even $d$ (see also Remark \ref{rem:vanishing}).
The invariants $W_{\R P^3}(d,d)$ have been computed in
\cite{GeorZin13}. 

\medskip
 We also establish a corresponding relation between the 
Gromov-Witten invariants of $\CP^3$ and $\CP^1\times \CP^1$. We denote by $GW_{\C P^3}(d)$ the count of degree $d$ rational curves in $\CP^3$ passing through~$2d$ generic points and by $GW_{\C P^1\times\C  P^1}\left(a,b\right)$ the count of rational curves of bidegree $(a,b)$ in $\CP^1\times \CP^1$ passing through $2(a+b)-1$ generic points. 
  
 \begin{thmintro}\label{thm:kollar}
For every  positive integer $d$, one has
$$GW_{\C P^3}(d)=\sum_{\substack{a+b=d\\ 0\le a<b}} (d-2a)^2 GW_{\C P^1\times\C  P^1}\left(a,b\right).$$
\end{thmintro}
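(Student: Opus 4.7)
The proof strategy, following Kollár's suggestion, is to use a pencil of quadrics in $\CP^3$ to reduce the enumeration of rational degree $d$ curves in $\CP^3$ to that of rational curves of bidegrees $(a,b)$ on smooth members of the pencil, each isomorphic to $\CP^1\times\CP^1$.

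First, I would fix a generic pencil $\Q=\{Q_t\}_{t\in\CP^1}$ of quadrics in $\CP^3$. Its base locus is a smooth elliptic quartic curve $E\subset\CP^3$; the generic member $Q_t$ is a smooth quadric $\cong\CP^1\times\CP^1$, and four special values of $t$ give quadric cones. Every point of $\CP^3\setminus E$ lies on a unique member of $\Q$, so for a generic configuration $\x$ of $2d$ points in $\CP^3$, any rational degree $d$ curve $C$ through $\x$ that is not contained in a single $Q_t$ projects, via the pencil, to a degree $2d$ cover of $\CP^1$.

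Next, I would set up a degeneration of the enumerative problem: deform $\x$ (or equivalently the pencil $\Q$) in a $1$-parameter family so that in the limit $C$ is forced to lie on a single smooth quadric $Q_{t_0}$, where it becomes a rational curve of bidegree $(a,b)$ with $a+b=d$. The goal is then to identify $GW_{\CP^3}(d)$ with a sum over $(a,b)$ of contributions from rational bidegree $(a,b)$ curves on $Q_{t_0}\cong\CP^1\times\CP^1$ passing through the limit of the point constraints. Each such contribution should equal $GW_{\CP^1\times\CP^1}(a,b)$ times a geometric multiplicity.

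The critical computation, which I expect to be the main obstacle, is the identification of this multiplicity as $(d-2a)^2=(b-a)^2$. It should arise from the normal bundle sequence
$$0\to N_{C/Q_{t_0}}\to N_{C/\CP^3}\to N_{Q_{t_0}/\CP^3}|_C\to 0$$
together with a Hurwitz-type count of deformations of $C$ off $Q_{t_0}$ into $\CP^3$ that preserve the $2d$ point constraints: the factor $b-a$ encodes the imbalance between the two rulings of $Q_{t_0}$, and the square $(b-a)^2$ reflects the independent contribution of each ruling. Summing over $0\le a<b$ with $a+b=d$ (the case $a=b$ vanishing automatically) and using the symmetry $(a,b)\leftrightarrow(b,a)$ of $\CP^1\times\CP^1$ yields the claimed identity. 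A secondary task is to rule out contributions from other boundary strata --- reducible limit curves, limits onto a singular (cone) member of $\Q$, or degenerations involving the base curve $E$ --- which should follow from standard dimension counts under generic choices of $\x$ and $\Q$.
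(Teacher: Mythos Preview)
Your proposal misses the central idea and misidentifies the origin of the factor $(d-2a)^2$. In the paper (following Koll\'ar), there is no degeneration at all: one simply chooses the $2d$ points $\x$ to lie \emph{on the base locus} $C_0$ of the pencil, which is a smooth elliptic quartic. Since a degree $d$ curve meets a generic quadric of $\Q$ in $2d$ points, and it already meets $C_0\subset Q_t$ in the $2d$ points of $\x$, Bezout forces every curve through $\x$ to sit inside some member of the pencil. The transversality argument (Propositions~4.1, 5.2, 5.3) then shows that for generic $\x\subset C_0$ this still counts $GW_{\CP^3}(d)$, and each relevant quadric contributes $GW_{\CP^1\times\CP^1}(a,b)$.

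The multiplicity $(d-2a)^2$ is not a normal-bundle or Hurwitz computation; it comes from the Jacobian of $C_0$. A curve of bidegree $(a,d-a)$ on $Q_E$ cuts out a divisor on $C_0$ in the class $aE+(d-a)(h-E)$, and requiring this to equal the class of $\x$ in $\Pic_{2d}(C_0)$ gives the equation $(d-2a)E=(d-a)h-\x$. Its solutions differ by $(d-2a)$-torsion points of the elliptic curve, and there are exactly $(d-2a)^2$ of those. Your heuristic that the square ``reflects the independent contribution of each ruling'' is not what is happening.

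Your proposed $1$-parameter degeneration is also problematic on its own terms: there is no reason all rational curves through a degenerating $\x$ should limit onto a \emph{single} quadric $Q_{t_0}$; in the correct picture different curves land on different quadrics of the pencil, indexed precisely by the torsion solutions above.
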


The idea of the proof begins with a result of J. Koll\'ar, \cite[Proposition 3]{Kol14}, which can be summarised as follows. A non-degenerate
elliptic curve $C_0$ of degree 4 in $\CP^3$ generates a pencil of
quadrics  $\Q$  with 
base locus $C_0$. 
If the configuration  $\x$ of $2d$ points is contained in $C_0$,  every curve in degree $d$, passing through all points in $\x$,  is contained in a non-singular quadric of
$\Q$, where it is of bidegree $(a,b)$, with $a\ne b$. Furthermore, all
 such  quadrics  can be recovered by a computation in the Jacobian
 of $C_0$ and in particular there are exactly $(d-2a)^2$ of them.
 To complete the proof of
Theorem \ref{thm:kollar} from \cite[Proposition 3]{Kol14}, an additional
transversality argument is needed: one has to show  that a
configuration $\x$ contained in $C_0$ can be chosen so that  
 the number of the degree $d$ curves passing through $\x$ is indeed the
corresponding Gromov-Witten invariant of $\CP^3$. We prove that this
is indeed the case in Section \ref{sec:proof}. The proof of Theorem
\ref{thm:main} further requires a comparison of the signs of the curves
which enter in the definitions of the Welschinger  invariants of $\CP^3$ and $\CP^1\times\CP^1$. This is accomplished in Section \ref{sec:quadric}.\\

Theorems  \ref{thm:main} and \ref{thm:kollar} relate the enumerative
geometry of $\CP^3$ on one hand and of $\CP^1\times\CP^1$ and of
elliptic curves on the other hand. Both theorems can certainly 
be generalised
to $\CP^3$ blown up in a small number of points. It would be
interesting to understand further generalisations.\\

 The paper is organised as follows. 
We start by recalling the definition of the Welschinger invariants  of
$\CP^1\times \CP^1$ and  $\CP^3$ in
Section \ref{sec:def welsch}.  In Section \ref{sec:elliptic pencil} we review 
some standard facts concerning elliptic curves and pencils of quadrics
in $\CP^3$.   In Section
\ref{sec:quadric} we investigate properties of rational curves in a smooth quadric of
$\CP^3$  and establish the comparison of their two Welschinger signs.
  We combine results from Sections \ref{sec:elliptic pencil} and 
\ref{sec:quadric} in Section \ref{sec:proof} to provide a
transversality argument that allows us to deduce Theorem \ref{thm:kollar}
from \cite[Proposition 3]{Kol14} and we prove
Theorem \ref{thm:main}. We end this paper with   explicit
computations of Welschinger invariants of $\CP^3$ together with   further
qualitative results and comments about them.

\medskip
{\bf Aknowledgment: } We are grateful to B. Bertrand, I.
Itenberg, and G. Mikhalkin for useful discussions. 
We also wish to thank the anonymous referee for his valuable comments
on the first version of this text.

\section{Welschinger invariants}\label{sec:def welsch}
\subsection{Welschinger invariants  of  $\CP^1\times\CP^1$}

Given  $a,b\in\Z_{\ge 0}$ and $\x$
 a generic real configuration of $2(a+b)-1$   points in
$\CP^1\times\CP^1$, let $\R\CC'(\x)$ be the set of all real rational curves of
bidegree $(a,b)$ in $\CP^1\times \CP^1$ passing through all points in
$\x$. 
  One 
can     associate a sign $(-1)^{s_{\RP^1\times \RP^1}(C)}$  to each
real curve $C$ in $\R\CC'(\x)$ such that 
the sum
$$W_{\RP^1\times\RP^1}((a,b),l)=\sum_{C\in \R\CC'(\x)}(-1)^{s_{\RP^1\times \RP^1}(C)} $$
only depends on $a$, $b$,  and the number $l$ of pairs of complex
conjugated
points in $\x$; see
\cite{Wel1}. 
Since
$\x$ is generic, every curve $C\in \R\CC'(\x)$ 
is nodal.
A real node of $C$ is either the intersection of two real
branches of $C$, or the intersection of two complex conjugated
branches. 
The node is called hyperbolic in the former and elliptic in the latter case.
The  number  $s_{\RP^1\times \RP^1}(C)$ is defined 
 to be the number of elliptic real nodes of
$C$. \\

The parity of  
$s_{\RP^1\times \RP^1}(C)$ 
 can be interpreted in terms of a spin
structure on $\RP^1\times\RP^1$ as follows. 
Since a curve $C\in \R\CC'(\x)$ is nodal, it has a parametrisation 
  $f:\CP^1\to \CP^1\times \CP^1$ which is a real algebraic immersion.
A choice of a trivialisation of
 $T\RP^1$ induces a trivialisation
 $$
 \phi_0: T(\RP^1\times\RP^1)\to \RP^1\times\RP^1\times \R^2.
 $$
The canonical orientation and scalar product on $\R^2$ 
 induce, via $\phi_0$, an orientation and a
Riemannian metric on $T(\RP^1\times\RP^1)$.
In their turn, the latter  induce a
trivialisation $\phi$ and a Riemannian metric on the $\R$-vector bundle
$f_{|\RP^1}^*T(\RP^1\times\RP^1)$.
The tangent bundle $T\RP^1$ is naturally a subbundle of 
$f_{|\RP^1}^*T(\RP^1\times\RP^1)$;   let $\N$ be its orthogonal
subbundle.
Since $w_1(T\RP^1)=0$,
there exist a  nowhere vanishing 
smooth section $\sigma_T:\RP^1\to T\RP^1$. Let
 $\sigma_\N:\RP^1\to\N$ be a section  such that
 $(\sigma_T,\sigma_\N)$ is a
positive basis of $f_{|\RP^1}^*T(\RP^1\times\RP^1)$ and
 $N$ be the parity of the number of times   the basis $\phi\circ (\sigma_T,\sigma_\N)$
rotates around the canonical basis of $\R^2$ as one goes around $\RP^1$.
In other words, the number $N$ is the parity of the degree of the Gau{\ss} map of
$f(\RP^1)$. Note that it does not depend on the choice of the
trivialisation  of $T\RP^1$.

\begin{lemma}\label{lem:w2}
One has $s_{\RP^1\times \RP^1}(f(\CP^1))=N \mod 2$. 
\end{lemma}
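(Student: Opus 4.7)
The plan is to relate both $s_{\RP^1\times\RP^1}(f(\CP^1))$ (the number of elliptic real nodes, which I abbreviate by $e$) and $N$ to the homology class $[f(\RP^1)]\in H_1(\RP^1\times\RP^1;\Z/2)$ via a Whitney--Johnson type formula for the torus equipped with its product framing $\phi_0$. First I would observe that $f_{|\RP^1}:\RP^1\to\RP^1\times\RP^1$ is an immersion whose transverse self-intersections are exactly the hyperbolic real nodes of $C=f(\CP^1)$; if $h$ denotes their number, then $f(\RP^1)$ is an immersed circle with $h$ self-crossings, while the elliptic real nodes are isolated real points of $C$ that do not lie on $f(\RP^1)$. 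The class $[f(\RP^1)]\in H_1(\RP^1\times\RP^1;\Z/2)$ equals $(a,b)\bmod 2$, as one checks by intersecting with a generic real ruling (complex conjugate intersection points cancel modulo $2$).

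Next I would invoke the Whitney--Johnson formula: if $q:H_1(\RP^1\times\RP^1;\Z/2)\to\Z/2$ denotes the quadratic enhancement of the mod $2$ intersection pairing associated with the spin structure induced by $\phi_0$, then for any smooth immersed circle $\gamma$ with $n$ transverse self-intersections and integral rotation number $w$ relative to $\phi_0$, one has
$$w+n+1\equiv q([\gamma])\pmod 2.$$
This classical identity is proved by noting that $w+n$ is invariant mod $2$ under regular homotopies (a type~I kink alters $w$ and $n$ each by one, a type~II move changes $n$ by two and leaves $w$ unchanged), and by verifying the formula on one representative per regular homotopy class, e.g.\ on an embedded straight coordinate circle for nonzero primitive classes and on an embedded circle bounding a small disk for the zero class. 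The product framing $\phi_0$ induces the Lie group spin structure on $\RP^1\times\RP^1$, as one sees by noting that an embedded $(1,0)$ coordinate circle has rotation number zero in $\phi_0$; hence $q(1,0)=q(0,1)=1$ and consequently $q(a,b)\equiv a+b+ab\pmod 2$.

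Applied to $\gamma=f(\RP^1)$, the formula gives $N\equiv q(a,b)+1+h\pmod 2$. To eliminate $h$ I would use the arithmetic genus relation: $C$ is a rational curve of bidegree $(a,b)$ and arithmetic genus $(a-1)(b-1)$, so its total number of nodes equals $(a-1)(b-1)$; thus $h+e+2c=(a-1)(b-1)$, where $c$ is the number of complex conjugate node pairs. Substituting yields
$$N\equiv(a+b+ab)+1+(a-1)(b-1)+e\pmod 2,$$
and the identity $(a+b+ab)+1+(a-1)(b-1)=2(ab+1)$ now forces $N\equiv e\pmod 2$, which is the claim.

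The main obstacle is justifying the Whitney--Johnson formula in the framing conventions used in the lemma: both the identification of the spin structure induced by $\phi_0$ with the Lie group spin structure on the torus and the classical mod~$2$ identity need to be set up so that sign and parity choices match throughout. Once that bookkeeping is settled, the remaining steps are a formal consequence of the arithmetic genus formula and the short computation above.
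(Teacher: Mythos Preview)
Your proposal is correct. The approach differs from the paper's mainly in packaging: you invoke the Whitney--Johnson formula $w+n+1\equiv q([\gamma])\pmod 2$ for the Arf quadratic refinement attached to the Lie-group spin structure, sketch its proof via regular-homotopy invariance, compute $q(a,b)\equiv a+b+ab$, and then combine with the adjunction count of nodes. The paper instead gives a direct smoothing argument: it resolves the hyperbolic crossings of $f(\RP^1)$ into $n\equiv 1+\kappa$ disjoint embedded circles, observes that homologically trivial ones have Gau{\ss} index $\pm 1$ while nontrivial ones (all parallel, say $m$ of them in a primitive class $(p,q)$) have Gau{\ss} index $0$, obtains $N\equiv \kappa+1+m$, and then shows $(a-1)(b-1)+1+m\equiv 0$ using $(mp,mq)\equiv(a,b)\pmod 2$. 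Your invocation of Whitney--Johnson and the paper's smoothing computation are two standard routes to the same parity identity; your version is cleaner to state and situates the lemma in the Arf/Johnson framework, while the paper's is more self-contained and avoids the bookkeeping you flag as the main obstacle (matching the framing to the Lie-group spin structure). The remaining ingredients --- the node count $(a-1)(b-1)$ and the identification $[f(\RP^1)]\equiv(a,b)\pmod 2$ --- are the same in both arguments.
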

\begin{figure}[h]
\begin{center}
\begin{tabular}{c}
\includegraphics[width=8cm, angle=0]{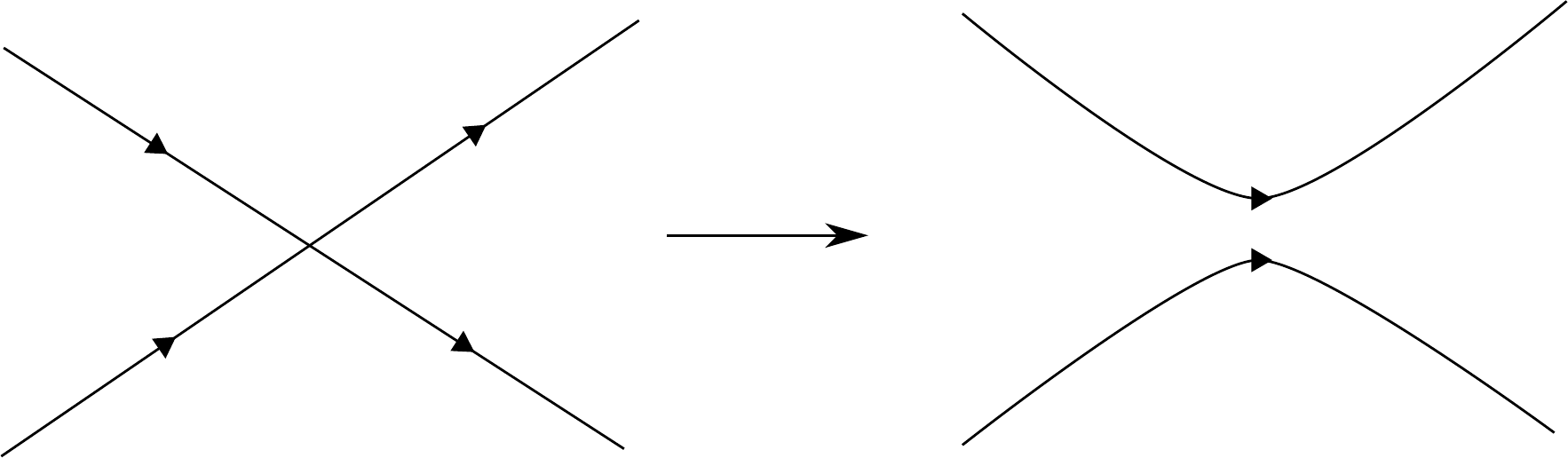}
\end{tabular}
\end{center}
\caption{Smoothing of $f(\RP^1)$}
\label{fig:node}
\end{figure}
\begin{proof}
We equip $f(\RP^1)$ with the orientation
induced by $\sigma_T$.
By  smoothing each node of $f(\RP^1)$ as depicted in
Figure \ref{fig:node}, we obtain a collection $\gamma$ of $n$ disjoint
oriented circles embedded
in $\RP^1\times\RP^1$. 
The sum of the Gau{\ss} index of all components of
 $\gamma$ is
 the Gau{\ss} index of $f(\RP^1)$ and we have
$$n=1+\kappa \mod 2, $$
where $\kappa$ is the number of hyperbolic nodes of $f(\CP^1)$.
Every 
embedded
circle  realising the zero class in $H_1(\RP^1\times\RP^1;\Z)$,
has Gau{\ss} index 
  $\pm1$.
Every 
embedded
circle,  which is non-trivial  in the homology of $ \RP^1\times\RP^1$,
is isotopic to 
a closed geodesic. In particular, it has Gau{\ss} index~0 and
represents a class 
$(p,q)\!\in\!H_1(\RP^1\times\RP^1;\Z)$, 
with $p$ and $q$ relatively prime.
Furthermore, since the components of the set~$\gamma$ do not
intersect, all non-trivial components must represent (up to orientation) the 
same 
class $(p,q) \!\in\! H_1(\RP^1\times\RP^1;\Z)$
and 
 the class of $f(\R P^1)$ can be written as $(mp,mq)$ for some $m\!\in\!\Z$.
Since the number $N$ is equal to the parity of the number of
homologically 
trivial components in $\gamma$, i.e. $n-m$ mod 2, we have 
\begin{center}
\begin{tabular}{ll}
$N=\kappa +1+m$&$ \mod 2$.  
\end{tabular}
\end{center}
\noindent
If $f(\C P^1)$ has bidegree $(a,b)$, by the adjunction formula it has exactly $(a-1)(b-1)$
nodes, $\kappa$ of which are hyperbolic. 
Thus,
\begin{center}
\begin{tabular}{ll}
$N=(a-1)(b-1) +1+m + s_{\RP^1\times \RP^1}(f(\CP^1))$&$ \mod 2$.  
\end{tabular}
\end{center}
\noindent
 Since $(mp,mq)=(a,b)\mod 2$, we have that
$m$ is even if and only if both $a$ and $b$ are even and so
$$s_{\RP^1\times \RP^1}(f(\CP^1))=N\mod 2,$$
 as announced.
\end{proof}

\subsection{Welschinger invariants of $\CP^3$}\label{sec:w3}

Given  $d\in\Z_{\ge 0}$ and $\x$
 a generic real configuration of $2d$   points in~$\CP^3$, let $\R\CC(\x)$ be the set of all real rational curves of
degree $d$ in $\CP^3$ passing through all points in
$\x$. 
  One 
can   again  associate a sign $(-1)^{s_{\RP^3}(C)}$  to each
real curve $C$ in $\R\CC(\x)$ such that 
the sum

$$W_{\R P^3}(d,l)=\sum_{C\in \R\CC(\x)}(-1)^{s_{\RP^3}(C)} $$
only depends on $d$ and the number $l$ of pairs of complex conjugated
 points in $\x$; see
\cite{Wel2}. 
We now recall the definition of $s_{\RP^3}(C)$. \\

We fix once and for all an orientation on $\RP^3$.
The projective space $\RP^3$ is  spin and
we may choose a trivialisation $\phi_0:T\RP^3\to \RP^3\times \R^3$ 
of its tangent bundle compatible with the chosen orientation.
The pullback by $\phi_0$ of the
canonical  Euclidean  scalar product on $\R^3$
provides a Riemannian metric
on $\RP^3$.
Let $f:\CP^1\to\CP^3$ be a real algebraic immersion.
The trivialisation of $T\RP^3$ induces a trivialisation and a
Riemannian metric on the $\R$-vector bundle $f_{|\RP^1}^*T\RP^3$.
The tangent bundle $T\RP^1$ is naturally a $\R$-subbundle of 
$f_{|\RP^1}^*T\RP^3$; let $\N_\R$ be its orthogonal
$\R$-subbundle. Choose an orientation of $\RP^1$ and a positive
orthonormal section $\sigma_T$ of $T\RP^1$.
Given 
a line $\R$-subbundle~$E$ of~$\N_\R$ with a non-vanishing smooth section
$\sigma_E$, such that $(\sigma_T,\sigma_E)$ is an orthonormal section of
$T\RP^1\oplus E$,   there exists  a unique choice of a section~$\sigma_\N$ of~$\N_\R$, such 
that $(\sigma_T,\sigma_E,\sigma_\N)$ is a positive orthonormal section of 
$f_{|\RP^1}^*T\RP^3$. Combining with the trivialisation $\phi_0$ of the latter
bundle, this section defines a loop in $SO_3(\R)$.
We define 
$$s(E)\in \{0,1\}$$ 
to be the number realised by this loop in 
 $\pi_1(SO_3(\R))=\{0,1\}$. 
 Note that $s(E)$ only depends on the isotopy class
of $E$ as an $\R$-subbundle of $\N_\R$ and on the homotopy class of  the restriction of $\phi_0$   to~$T\R P^3_{|f(\R P^1)}$.\\

Let $d$ be the degree of  $f(\CP^1)$ in $\CP^3$. 
The quotient $\N_\C$ of $f^*T\CP^3$ by $T\CP^1$ is a holomorphic vector
bundle over $\CP^1$ with first Chern class $4d-2$. 
Suppose   that the curve $f(\CP^1)$ is
 \emph{balanced}, i.e. 
$\N_\C$ is isomorphic to the holomorphic bundle
 $\mathcal O_{\CP^1}(2d-1)\oplus \mathcal O_{\CP^1}(2d-1)$.
In this case, there is a consistent way of choosing a line $\R$-subbundle
$L(f)$ of the $\R$-bundle $\N_\R$ as follows.
 Holomorphic  line subbundles of $\N_\C$ are in
one-to-one correspondence with  rational functions $F:\CP^1\to\CP^1$:
a fiber of such subbundle
over the point
$u$ has equation $w=F(u)z$, where $(u,z,w)\subset \C^3$ are local coordinates on
$\N_\C$ such that both
$(u,z)$ and $(u,w)$ are local coordinates on $\mathcal
O_{\CP^1}(2d-1)$. Moreover, a holomorphic line subbundle of
$\N_\C$ defined by the rational function $F$ has degree $2d-1-\deg F$
and is real if and only if $F$ is real.
In particular, up to  real isotopy, there exists a unique holomorphic
real line subbundle of $\N_\C$  of degree $2d-1$ and two 
holomorphic
real line subbundles of~$\N_\C$ of degree~$2d-2$, depending on whether
$F_{|\RP^1}$ is orientation preserving or not. Phrased differently,
the two (real isotopy classes of) holomorphic
real line subbundles of $\N_\C$  of degree~$2d-2$  are characterised by
the direction in which the real part of a   fiber rotates in  $\R\N_\C$.
The Riemannian metric on~$\RP^3$ identifies the $\R$-bundles $\N_\R$ and
$\R\N_\C$. In particular,
the section $\sigma_T$ of $T\RP^1$ together with the  orientation
of $\RP^3$ induce an orientation on the bundle $\R\N_\C$: given $u\in
\RP^1$, a positive 
basis of the fiber at $u$ of $f_{|\RP^1}^*T\RP^3$ is formed by
 $(\sigma_T(u), v_1,v_2)$ with $(v_1,v_2)$ a positive basis of $\R\N_\C$. 
We denote by $L(f)$ (resp. $\overline L(f)$) the isotopy class of the real part of the degree $2d-2$
  line subbundle of~$\N_\C$   whose real fibers rotate positively
  (resp. negatively)
in local holomorphic
coordinates on
$\N_\C$ defining a real holomorphic splitting  $\N_\C=\mathcal
O_{\CP^1}(2d-1)\oplus\mathcal O_{\CP^1}(2d-1)$. 
Since  $L(f )$ and $\overline L(f )$
differ by exactly one full rotation, which is a generator of $\pi_1(SO_2(\R))$, 
we have
$s(L(f )) \ne s(\overline L(f ))$.\\

For a generic real configuration $\x$ of points in $\RP^3$, 
every curve $C$ in $\R\CC(\x)$ is parametrised 
by a balanced immersion $f:\CP^1\to \CP^3$ and 
 the number  $s_{\RP^3}(C)$ is defined as
$$s_{\RP^3}(C)=s(L(f)).$$
The number  $s_{\RP^3}(C)$ is independent of the
parametrisation but 
in general
  depends on the
choice of a trivialisation $\phi_0$. 
%
In the remaining of this note,
we always assume
that $\phi_0$ is chosen so that $s_{\RP^3}(D)=0$ for a line $D$ in $\CP^3$.
\footnote{Such a choice is possible since every trivialization of $T\R P^3$ over the 2-skeleton extends to the 3-skeleton. The two homotopy classes of trivializations over the 2-skeleton correspond to different values of $s_{\R P^3}(D)$.}

\section{Elliptic curves and pencil of quadrics in
  $\CP^3$}\label{sec:elliptic pencil}
In this section we recall some known facts about the Picard group of complex and
real
 elliptic
curves, their torsion points, and their
 relation with pencils of quadrics in $\CP^3$.

\subsection{Torsion points of complex and real 
elliptic curves}\label{sec:elliptic}
Let $C_0$ be a  complex elliptic curve.
Recall that a choice of $p_0\in C_0$ induces an isomorphism 
$$\begin{array}{cccc}
\psi:&C_0 &\longrightarrow & \Pic_0(C_0)
\\ & p &\longmapsto & [p]-[p_0],
\end{array}$$
 which induces in its turn a group structure on $C_0$.
 Geometrically, writing $C_0$ 
 as the quotient of $\C$ by a full rank lattice $\Lambda$ for which $p_0$ is the
 orbit of $0$, the group structure induced by $\psi$ on $C_0$
is simply the group structure
 inherited from $(\C,+)$ by the quotient map.
This description allows to easily describe torsion points of order
$d$ on $C_0$: if $\Lambda=u\Z + v\Z$, with $u$ and $v$ two  complex
numbers, then the set of solutions of 
\begin{equation}\label{equ:torsion}
dp=0
\end{equation}
  is a group of order $d^2$ isomorphic to
$\Z/d\Z\oplus\Z/d\Z$ and generated by 
$\frac{1}{d}u$ and $\frac{1}{d}v$.\\

Analogously, 
the map $\psi$ induces the following series of isomorphisms $\Psi_d$
with  $d\in\Z$:
$$\begin{array}{cccc}
\Psi_d:&\Pic_d(C_0) &\longrightarrow & C_0
\\ & \sum_{i=1}^d \left[p_i\right] &\longmapsto & \psi^{-1}\left(\sum_{i=1}^dp_i - dp_0\right)
\end{array}$$
satisfying
$$\Psi_d(E)+\Psi_{d'}(E')= \Psi_{d+d'}(E+E').$$

Suppose now that $C_0$ is  real, with $\R C_0\ne \emptyset$, and that
$p_0\in\R C_0$. If $\R C_0$ is not connected, the connected component of $\R C_0$
containing $p_0$ is called the \emph{pointed component} of $\R C_0$. 
The real structure on $C_0$ induces a real structure on $\Pic_d(C_0)$ 
for every $d\in\Z$ and 
the maps $\Psi_d$ are all real maps (see \cite{GroHar81}).
Recall also (see \cite{Nat90}) that
$C_0$ can be expressed as 
$\C/\Lambda$ with the real structure inherited by the complex
conjugation on $\C$, where $\Lambda$ has one of the following forms:
\begin{itemize}
\item $\Lambda=u\Z + iv \Z$ with $u$ and $v$ two real numbers. In this
  case $\R C_0$ has two connected components: $\R/u\Z$ and
  $(\R +\frac{iv}{2})/u\Z$ (see Figure \ref{fig:real ell}a). When $d$
  is even, both
  connected components of $\R C_0$ contain exactly $d$ solutions of
  Equation $(\ref{equ:torsion})$. When $d$ is odd, Equation
  $(\ref{equ:torsion})$ has exactly $d$ real solutions, all located on
  the pointed component of $\R C_0$.

\medskip
\item $\Lambda= u\Z + \overline u \Z$ with $u$ a complex number. In
  this case $\R C_0=\R/(u+\overline u)\Z$ is connected (see Figure
  \ref{fig:real ell}b). Equation
  $(\ref{equ:torsion})$ has exactly $d$ real solutions for any $d$.
\end{itemize}
\begin{figure}[h!]
\centering
\begin{tabular}{ccc} 
\includegraphics[height=3.5cm, angle=0]{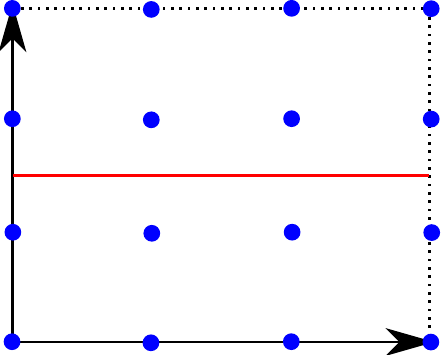}
\put(5, 2){$u$}
\put(-135, 95){$iv$}
\put(5, 52){$\R+\frac{iv}{2}$}
& \hspace{10ex} &
\includegraphics[height=3.5cm, angle=0]{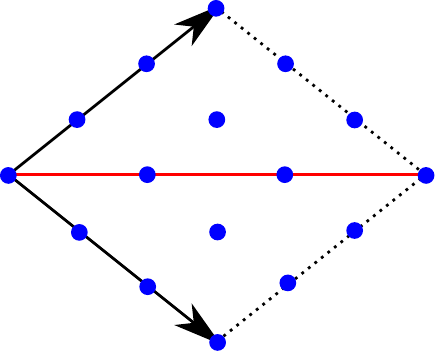}
\put(-80, 95){$u$}
\put(-80, 2){$\overline u$}
\put(5, 52){$\R$}
\\
\\ a) A maximal real elliptic curve&& b) A real elliptic curve with a
connected real part 
\end{tabular}
\caption{Uniformisation of real elliptic curves with a non-empty real
  part. The 
points represent the solutions of $3p=0$.}
\label{fig:real ell}
\end{figure}

In this paper,  
all considered elliptic curves   are assumed to
  be equipped with a distinguished point~$p_0$, which is real if the
  curve is real. In particular, we always identify $\Pic_d(C_0)$ with $C_0$ 
via the map~$\Psi_d$.

\subsection{Pencils of quadrics}\label{sec:pencil}
Let $\Q$ be a pencil of quadrics in $\CP^3$, i.e. a line in the space
$\CP^{9}$ of quadrics in $\CP^3$. We assume that $\Q$
is generic enough so that the
base locus of $\Q$ is a non-degenerate elliptic curve $C_0$ of degree
4 in $\CP^3$. 
Conversely, every non-degenerate elliptic curve $C_0$ of degree~4 in~$\CP^3$
defines a pencil of quadrics with base locus $C_0$.
We denote by $h\in C_0\simeq \Pic_4(C_0) $ the hyperplane section class.\\

A non-singular quadric in $\CP^3$ is isomorphic to
$\CP^1\times\CP^1$ and admits a ruling by two families of lines in
$\CP^3$; let $D_1$ and $D_2$ be two lines representing the families. Since $C_0$ is of bidegree $(2,2)$ in the non-singular
quadrics of $\Q$, 
any such quadric defines two elements $E_i=D_i\cap C_0$ of $ C_0\simeq \Pic_2(C_0)$, 
 whose sum is $h$. Conversely, given $E\in\Pic_2(C_0)$,
 the union of all lines in
$\CP^3$, whose intersection with $C_0$ is a divisor defining $E$ or $h-E$, is 
 a quadric $Q_E$ in $\Q$.
Hence, the map 
$$\begin{array}{cccc}
\pi_{\Q}:&C_0\simeq \Pic_2(C_0) &\longrightarrow & \Q
\\  & E  &\longmapsto & Q_E
\end{array} $$
is a ramified covering of degree two. The ramification values of
$\pi_{\Q}$ correspond to singular quadrics in $\Q$, which are
all quadratic cones.  The corresponding
critical points of $\pi_{\Q}$ are the solutions of the equation $2E=h$ and   so any two
of them differ by a torsion point of order 2. Thus, $\Q$ contains
exactly~4 distinct singular quadrics, in accordance with the
 Riemann-Hurwitz formula.\\

Suppose now that $C_0$ is real with $\R C_0\ne\emptyset$. In this case, the pencil $\Q$ is real and the fiber
over a regular value $Q\in\R \Q$ of $\pi_\Q$ consists of two real
points (resp. two complex conjugate points) if and only if $\R Q$ is
homeomorphic to $S^1\times S^1$ (resp. $S^2$).
By  the description of the real torsion points of~$C_0$ given in Section \ref{sec:elliptic}, we have the following
possibilities  for the map $\pi_{\Q|\R C_0}$:
\begin{itemize}

\item  $\R C_0$ is not connected and $h$ is on the non-pointed
  component of $\R C_0$ (equivalently, both components of $\R C_0$ are
  non-trivial in $\pi_1(\RP^3)$):  none of 
the four singular fibers of
  $\Q$ are real (see Figure \ref{fig:real pencil}a),

\item  $\R C_0$ is not connected and $h$ is on the pointed
  component of $\R C_0$ (equivalently, both components of $\R C_0$ are
  trivial in $\pi_1(\RP^3)$):  the four singular fibers of
  $\Q$ are real (see Figure~\ref{fig:real pencil}b),

\item $\R C_0$ is connected: exactly two singular fibers of  
  $\Q$ are real (see Figure~\ref{fig:real pencil}c). 

\end{itemize}
\begin{figure}[h!]
\centering
\begin{tabular}{ccccc} 
\includegraphics[height=5.5cm, angle=0]{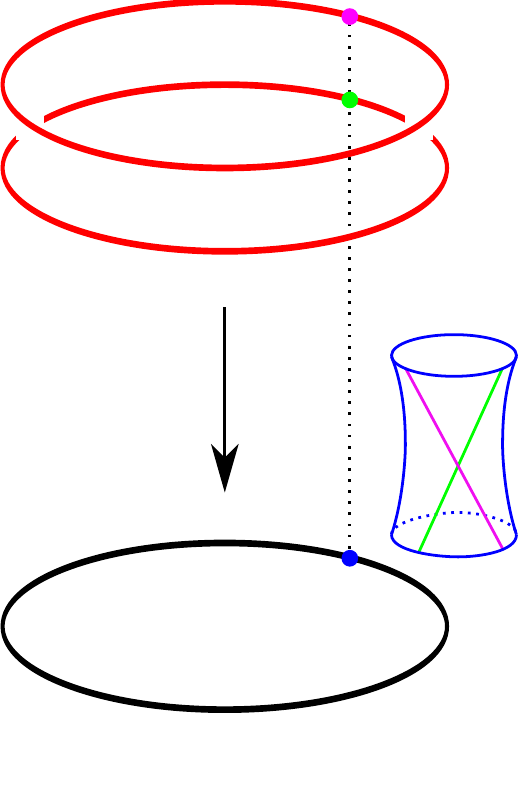}
\put(-15, 0){$\R\Q$}
\put(-5, 120){$\R C_0$}
\put(-80, 65){$\pi_\Q$}
&\hspace{5ex} 
& \includegraphics[height=5.5cm, angle=0]{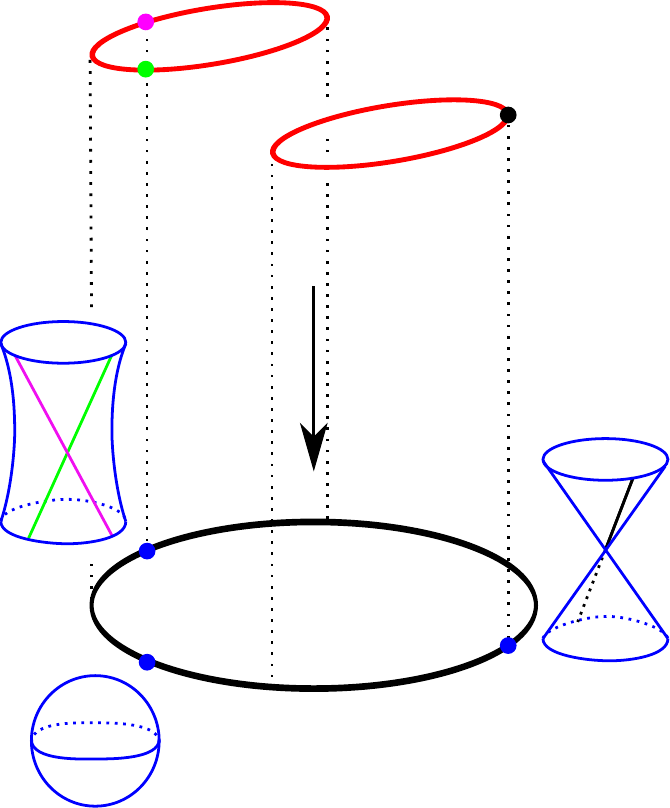}
&\hspace{2ex} 
&\includegraphics[height=5.5cm, angle=0]{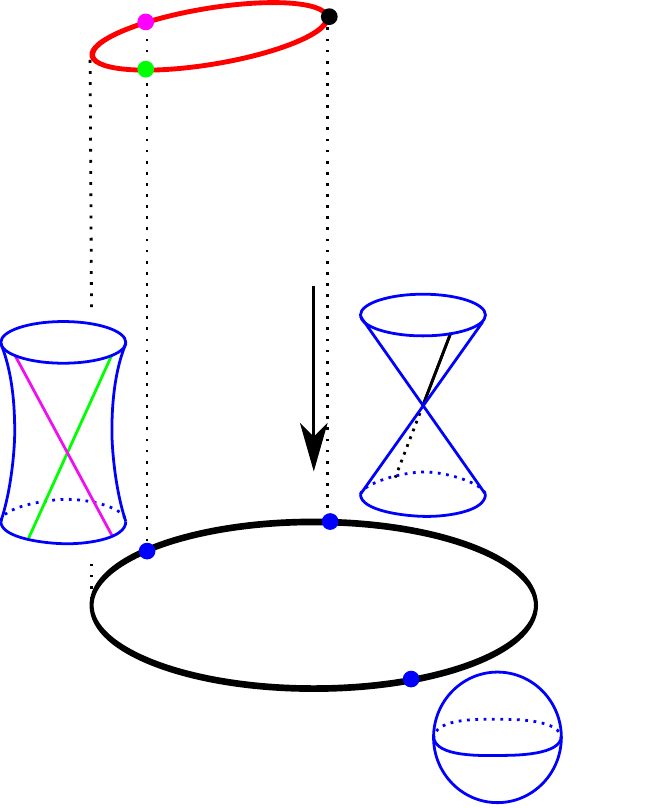}
\\
\\ a) && b)&& c) 
\end{tabular}
\caption{Real pencils of quadrics}
\label{fig:real pencil}
\end{figure}

\section{Rational curves on a smooth quadric of $\CP^3$}\label{sec:quadric}
In this section we study properties of rational curves in a smooth quadric in $\CP^3$ and establish a comparison of the signs of  a  real curve as an element of the quadric and of the projective space. \\

Throughout the section we fix a smooth quadric $Q$ of $\CP^3$. In particular,
$Q$ is isomorphic to $\CP^1\times\CP^1$ and we denote by $D_1$ and
$D_2$ two distinct intersecting lines in $Q$.
Given an algebraic immersion $f:\CP^1\to \CP^3$ contained in $Q$, there is an exact sequence of holomorphic vector bundles over $\CP^1$:
\begin{equation}\label{exseq} 0\to \N'_{\C} \to \N_\C \to f^*\N_Q\to 0, \end{equation}
where $\N'_{\C}$ is the quotient bundle $f^* TQ/T\CP^1$, $\N_{\C}$ is the quotient bundle $f^* T\CP^3/T\CP^1$,
and  $\N_Q=T\CP^3 |_{Q}/TQ$ is the
normal bundle of~$Q$ in~$\CP^3$. When $Q$ and $f$ are real, we also have the corresponding real bundles $\R\N_\C', \R\N_\C,$ and $\R\N_Q$ fitting in an exact sequence as above.

\begin{prop}\label{prop:balanced2}
Let $f:\CP^1\to \CP^3$ be an algebraic 
immersion such that $f(\CP^1)$ is contained in~$Q$, where it
has bidegree
$(a,b)$, with $a\ne b$. Then, $f$ is balanced.
\end{prop}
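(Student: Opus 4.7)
The plan is to determine the degrees of the three line bundles in the exact sequence \eqref{exseq}, then use its cohomology to narrow the possible splitting types of $\N_\C$ down to two, and finally eliminate the non-balanced possibility using the hypothesis $a\ne b$. The degrees are straightforward: since $f(\CP^1)$ is a rational curve of bidegree $(a,b)$ on $Q\cong\CP^1\times\CP^1$, a direct computation (or adjunction) gives $\N'_\C\cong\mathcal{O}_{\CP^1}(2(a+b)-2)=\mathcal{O}_{\CP^1}(2d-2)$; and since $\N_Q=\mathcal{O}_{\CP^3}(2)|_Q$, we have $f^*\N_Q\cong\mathcal{O}_{\CP^1}(2d)$. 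In particular $\deg\N_\C=4d-2$.

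Writing $\N_\C\cong\mathcal{O}(a_1)\oplus\mathcal{O}(a_2)$ with $a_1+a_2=4d-2$, I would twist \eqref{exseq} by $\mathcal{O}(-2d+1)$ to obtain $0\to\mathcal{O}(-1)\to\N_\C(-2d+1)\to\mathcal{O}(1)\to 0$. Since $H^0(\mathcal{O}(-1))=H^1(\mathcal{O}(-1))=0$, the long exact sequence gives $h^0(\N_\C(-2d+1))=h^0(\mathcal{O}(1))=2$; combined with the constraint $a_1+a_2=4d-2$, this forces the unordered pair $\{a_1,a_2\}$ to be either $\{2d-1,2d-1\}$ (balanced) or $\{2d-2,2d\}$ (non-balanced).

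To eliminate the non-balanced case, I would first observe that any sub-line bundle inclusion $\mathcal{O}(2d-2)\hookrightarrow\mathcal{O}(2d-2)\oplus\mathcal{O}(2d)$ must project as a nonzero constant to the first summand: otherwise the image would be supported in the $\mathcal{O}(2d)$ summand, and a nonzero map $\mathcal{O}(2d-2)\to\mathcal{O}(2d)$ would saturate to the full summand, contradicting the assumption that the image is a sub-line bundle of degree $2d-2$. Such a projection is a retraction of \eqref{exseq}, so the non-balanced splitting of $\N_\C$ is equivalent to \eqref{exseq} being split, and equivalently to the extension class $\delta\in\mathrm{Ext}^1(f^*\N_Q,\N'_\C)\cong H^1(\CP^1,\mathcal{O}(-2))\cong\C$ being zero.

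The main obstacle is therefore to show $\delta\ne 0$ when $a\ne b$. I would compute $\delta$ as the image, under pullback by $f$ followed by the quotient by $T\CP^1$, of the extension class $\delta_0\in\mathrm{Ext}^1_Q(\N_Q,TQ)\cong H^1(\mathcal{O}(-2,0))\oplus H^1(\mathcal{O}(0,-2))$ of the normal sequence of $Q$ in $\CP^3$. The class $\delta_0$ is, up to a nonzero scalar, represented by the Hessian of the defining quadratic form, which is non-degenerate, so that both K\"unneth components of $\delta_0$ are nonzero. Under $f^*$, using the decomposition $f^*TQ\cong\mathcal{O}(2a)\oplus\mathcal{O}(2b)$, the two components pull back into $H^1(\CP^1,\mathcal{O}(-2b))$ and $H^1(\CP^1,\mathcal{O}(-2a))$ respectively; quotienting by the image of $T\CP^1$ collapses them into $H^1(\CP^1,\mathcal{O}(-2))$, and the hypothesis $a\ne b$ (reflected in the asymmetric twists $\mathcal{O}(2a)$ vs.\ $\mathcal{O}(2b)$) is precisely what prevents these two contributions from cancelling. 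The explicit bookkeeping through this diagram of K\"unneth components is the core of the calculation.
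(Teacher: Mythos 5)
Your reduction is sound as far as it goes: the degree computations, the cohomological argument pinning the splitting type of $\N_\C$ down to $\{2d-1,2d-1\}$ or $\{2d-2,2d\}$, and the verification that the non-balanced type occurs if and only if the sequence \eqref{exseq} splits are all correct (the paper uses the same equivalence, stated more briefly). But the decisive step --- showing the extension class $\delta\in H^1(\CP^1,\mathcal O(-2))\cong\C$ is nonzero when $a\ne b$ --- is not actually proved. You set up the right diagram ($\delta$ is the image of $f^*\delta_0$ under the quotient by $T\CP^1$), but the conclusion rests on the assertion that ``the hypothesis $a\ne b$ \ldots{} is precisely what prevents these two contributions from cancelling,'' which is a restatement of the claim, not an argument. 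Concretely: $f^*\delta_0$ lives in $H^1(\mathcal O(-2b))\oplus H^1(\mathcal O(-2a))$, a space of dimension $2a+2b-2$, and must be shown to avoid the $(2a+2b-3)$-dimensional kernel of the surjection onto $H^1(\mathcal O(-2))$; that kernel is the image of $H^1(\mathcal O(2-2d))$ under a map built from $df=(df_1,df_2)$ and so depends on the particular parametrisation $f$, not just on $(a,b)$. Whether the class lands in the kernel is exactly the content of the proposition, and nothing in the proposal isolates why the answer depends only on $a-b$. (The claim that both K\"unneth components of $\delta_0$ are nonzero is also asserted rather than checked, though that part is easier to supply, e.g.\ by restricting the normal sequence of $Q$ to a line of each ruling.)

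For comparison, the paper closes this gap in two ways: it invokes \cite[Theorem 4.f.3]{GrifHar83}, by which \eqref{exseq} splits if and only if $f(\CP^1)$ is a complete intersection --- impossible for bidegree $(a,b)$ with $a\ne b$, since every complete intersection on $Q$ has bidegree $(k,k)$ --- and it also gives a self-contained geometric argument: a splitting would produce a section of $\N_\C$ vanishing exactly on $f^{-1}(C_0)$ for an elliptic curve $C_0$ of bidegree $(2,2)$, hence a first-order deformation of $f$ in the pencil of quadrics through $C_0$ fixing the class of $f_\epsilon(\CP^1)\cap C_0$ in $\Pic_{2d}(C_0)$; but that class equals $(a-b)E_{1,\epsilon}+bh$ and genuinely moves when $a\ne b$. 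Either of these mechanisms is what your K\"unneth bookkeeping would ultimately have to reproduce; without carrying it out (or citing the Griffiths--Harris criterion), the proof is incomplete at its central point.
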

\begin{proof}
We want to prove that the normal bundle
 $\N_\C$ is isomorphic to the holomorphic bundle 
$\mathcal O_{\CP^1}(2d-1)\oplus \mathcal
O_{\CP^1}(2d-1)$, where $d=a+b$.
 By the adjunction formula,
the line bundle $\N'_\C $ has degree $2d-2$ and thus $f^*\N_Q$ has
degree~$2d$.
Hence, the map $f$ is balanced if and only if the sequence~(\ref{exseq}) does not
split. According to \cite[Theorem 4.f.3]{GrifHar83}, whose proof
extends to immersions, this sequence 
splits if and only if $f(\C P^1)$ is a complete intersection. Since 
$a\ne b$,  this is not the case.\\

We
briefly 
recall the main lines of the proof of 
\cite[Theorem 4.f.3]{GrifHar83}.
Suppose that 
 the above exact sequence splits and let $C_0\subset Q$ be an elliptic curve
of bidegree $(2,2)$ intersecting $f(\C P^1)$ transversely.
 Then, there exists a holomorphic section $\sigma$
 of $ \N_\C$, which vanishes  only at the $2d$ points of $f(\CP^1)\cap
 C_0$. If $\Q$ denotes the pencil of quadrics in $\CP^3$ with base
 locus $C_0$, 
 the section $\sigma$ corresponds to a first order
 deformation $f_\epsilon:\CP^1\to Q_\epsilon$
 of $f$ in the pencil $\Q$. Since $\sigma$
 vanishes   at the $2d$ points in $f(\CP^1)\cap C_0$,  the divisor
 class realised in $\Pic_{2d}(C_0)$ by 
 $f_{\epsilon}(\CP^1)\cap  C_0$
   is constant. 
On the other hand, as explained in Section \ref{sec:pencil},
the
quadric $Q_\epsilon$ in $\Q$ is determined by  the
 class $E_{1,\epsilon}$ realised by $D_{1,\epsilon}\cap C_0$ in
 $\Pic_2(C_0)$, where $D_{i,\epsilon}$ is the deformation in 
$\Q_\epsilon$ of $D_i$. 
Since $f_{\epsilon}(\CP^1)$ realises the class 
$aD_{1,\epsilon}+bD_{2,\epsilon}= (a-b)D_{1,\epsilon} + bH$ in
$\Pic(\CP^1\times\CP^1)$, where 
$H$ is the hyperplane section, 
$f_{\epsilon}(\CP^1)\cap C_0$ realises the class 
$(a-b)E_{1,\epsilon} + bh$ in $\Pic_{2d}(C_0)$. 
However, the class  $h$ is constant
along the deformation, whereas  the class $(a-b)E_{1,\epsilon}$ is not,  since $a-b\ne 0$.
 This is a contradiction.
\end{proof}

Suppose now that
 $Q$ is real with a real part
homeomorphic to $\RP^1\times\RP^1$ and that $D_1$ and $D_2$ are also
 real. Recall that, for a real algebraic immersion $f:\C P^1\rightarrow \C P^3$, the   isotopy classes 
 $L(f)$ and~$\overline L(f)$ and the numbers $s(E)$, for an orientable $\R$-subbundle $E\subset \R\mathcal{N}_\C$,
have been defined in Section \ref{sec:w3}.

\begin{prop}\label{prop:lines} 
Let  
$\R\mathcal{N}'_i$
be the real part of the normal bundle of $D_i$ in $Q$, 
considered as a subbundle of the real part of its normal bundle in $\C P^3$.
 Then, one has $s(\R\mathcal{N}'_1)\ne s(\R\mathcal{N}'_2)$.
\end{prop}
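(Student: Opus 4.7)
The plan is to reduce the proposition to an explicit computation in affine coordinates. Using a real automorphism of $\CP^3$ preserving $Q$, I may assume $Q=\{XW=YZ\}$ with $D_1=\{Y=W=0\}$ and $D_2=\{Z=W=0\}$, two real lines intersecting at $[1:0:0:0]$ and coming from the two rulings of $Q$. In the affine chart $X\ne 0$ with coordinates $(y,z,w)$ the quadric becomes $\{w=yz\}$, $D_1$ is the $z$-axis parametrized by $t\mapsto(0,t,0)$, and $D_2$ is the $y$-axis parametrized by $u\mapsto(u,0,0)$. Differentiating $w-yz=0$ immediately gives the normal direction of $D_1$ in $Q$ at $(0,t,0)$ as $\partial_y+t\,\partial_w$ and that of $D_2$ at $(u,0,0)$ as $\partial_z+u\,\partial_w$.

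I would then determine the orientation of $\R\mathcal{N}_\C$ induced by $\sigma_T$ and the fixed orientation of $\RP^3$. Taking the latter so that $(\partial_y,\partial_z,\partial_w)$ is positive in the chart, the prescription of Section~\ref{sec:w3} yields the positive frame $(\partial_w,\partial_y)$ on $\R\mathcal{N}_{\C,D_1}$ (since $\sigma_T=\partial_z$ and $(\partial_z,\partial_w,\partial_y)$ is a cyclic permutation of $(\partial_y,\partial_z,\partial_w)$, hence positive in $T\RP^3$) and the positive frame $(\partial_z,\partial_w)$ on $\R\mathcal{N}_{\C,D_2}$. In these oriented frames $\R\mathcal{N}'_1$ is the line through $(t,1)$ and $\R\mathcal{N}'_2$ the line through $(1,u)$. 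Parametrizing $\RP^1$ by $\theta\in[0,\pi)$ via $\tan\theta$, a direct check shows that as $\theta$ traverses $\RP^1$, the first line rotates clockwise by $\pi$ while the second rotates counterclockwise by $\pi$.

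Hence $\R\mathcal{N}'_1$ and $\R\mathcal{N}'_2$ rotate in opposite senses relative to their respective induced orientations. Since the two real isotopy classes of holomorphic real line subbundles of $\mathcal{N}_\C$ of degree $2d-2=0$ are distinguished precisely by this rotation direction (Section~\ref{sec:w3}), one of $\R\mathcal{N}'_1,\R\mathcal{N}'_2$ is isotopic to $L(D_i)$ and the other to $\overline L(D_i)$; together with $s(L(f))\ne s(\overline L(f))$, this yields $s(\R\mathcal{N}'_1)\ne s(\R\mathcal{N}'_2)$.

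The main obstacle is bookkeeping the orientations: although the directions $(t,1)$ and $(1,u)$ look like symmetric families of lines, the induced positive frames $(\partial_w,\partial_y)$ and $(\partial_z,\partial_w)$ differ by an odd permutation of $(\partial_y,\partial_z,\partial_w)$, and this is precisely what produces the opposite rotations and hence the asymmetry between $D_1$ and $D_2$.
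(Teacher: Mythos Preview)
Your proof is correct and follows essentially the same line as the paper's: both arguments reduce the statement to checking that $\R\N'_1$ and $\R\N'_2$ rotate in opposite senses inside their respective oriented normal planes $\R\N_{\C,D_i}$, so that one lies in the class $L(D_i)$ and the other in $\overline L(D_j)$, whence $s(\R\N'_1)\ne s(\R\N'_2)$.

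The difference is only in how the opposite-rotation claim is justified. The paper observes that for a line $D$ the degree-$1$ holomorphic line subbundles of $\N_\C$ are exactly the planes through $D$, which gives a geometric reference frame in $\R\N_\C$; it then asserts without further computation that the two rulings rotate oppositely. You instead carry this out explicitly in the affine chart of $Q=\{XW=YZ\}$, computing the normal directions $\partial_y+t\,\partial_w$ and $\partial_z+u\,\partial_w$ and tracking the induced orientations on $\R\N_{\C,D_i}$. Your final paragraph pinpoints nicely where the asymmetry actually enters: the positive frames $(\partial_w,\partial_y)$ and $(\partial_z,\partial_w)$ differ by an odd permutation, which is exactly what the paper's terse ``do not rotate in the same direction'' encodes. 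So your version is a coordinate verification of what the paper states conceptually; neither approach offers a real shortcut over the other.
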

\begin{proof}
Given a line $D$ in $\CP^3$, the holomorphic line subbundles of degree 1 of
its normal bundle in~$\CP^3$
 correspond precisely to the planes of $\CP^3$ containing
$D$. 
In particular, the normal bundle~$\R\mathcal{N}'_i$ realises the  
isotopy class 
$L(D_i)$
 if and only if it
 rotates 
in $\RP^3$ 
around $\R D_i$ in the positive direction. Since $\R\mathcal{N}'_1$
and $\R\mathcal{N}'_2$ do not rotate in the same direction, the result
follows.
\end{proof}
 
We will say that $(D_1,D_2)$ is \emph{the positive
  basis} of $H_2(Q;\Z)$ if $s(\R\mathcal{N}'_1)=0$.

\begin{cor}\label{prop:eps-eps'} 
Let $f:\CP^1\to \CP^3$ be a real algebraic 
immersion such that $f(\CP^1)$ is contained in~$Q$, where it
has a bidegree
$(a,b)$ in the positive basis, with $a\ne b$. Then, one has
$$s(\R\N'_\C)= s_{\RP^1\times\RP^1}(f(\CP^1)) +b.$$
\end{cor}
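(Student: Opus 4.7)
The plan is to compute $s(\R\N'_\C)$ by comparing the given trivialisation $\phi_0$ of $T\RP^3$, restricted to $\R Q$, with a product trivialisation adapted to the splitting $T\RP^3|_{\R Q} = T\R Q \oplus \R\N_Q$. As a preliminary, I observe that $\R\N_Q$ is trivial over $\R Q$: since $\RP^3$ and $\R Q \cong \RP^1\times\RP^1$ are orientable, $w_1(\R\N_Q) = w_1(T\RP^3)|_{\R Q} + w_1(T\R Q) = 0$, and an orientable line bundle over the torus is trivial. I fix an orientation and trivialisation $\phi_\N$ of $\R\N_Q$, combine it with the product trivialisation $\phi_{\R Q}$ of $T\R Q$ induced from a trivialisation of $T\RP^1$ as in Lemma \ref{lem:w2}, and assemble $\phi_{\mathrm{prod}} := \phi_{\R Q} \oplus \phi_\N$ with orientation aligned to that of $\phi_0$.

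The map $g := \phi_0 \circ \phi_{\mathrm{prod}}^{-1}: \R Q \to SO_3(\R)$ then defines, via abelianisation of $\pi_1$, a class $\delta \in H^1(\R Q; \pi_1(SO_3(\R))) = H^1(\R Q; \Z/2)$ such that, for every loop $\gamma \subset \R Q$, the restriction $g|_\gamma$ has $\pi_1(SO_3)$-class $\delta([\gamma])$. Picking $\sigma_T$ in $T\RP^1$ and $\sigma_E$ in $\R\N'_\C$ with $(\sigma_T,\sigma_E)$ a positive orthonormal frame of $T\R Q|_{\gamma}$, and completing to a positive orthonormal frame $(\sigma_T,\sigma_E,\sigma_\N)$ of $T\RP^3|_{\gamma}$, the orientation choices make $\sigma_\N$ a positive section of $\R\N_Q$ under $\phi_\N$. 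Consequently, under $\phi_{\mathrm{prod}}$ the frame factors as the $2$-dimensional Gau{\ss} map of $\gamma$ in $\R Q$ composed with the constant $+1$, and by Lemma \ref{lem:w2} its parity equals $s_{\RP^1\times\RP^1}(f(\CP^1))$. Passing from $\phi_{\mathrm{prod}}$ to $\phi_0$ costs $\delta([\gamma])$, yielding
$$s(\R\N'_\C) = s_{\RP^1\times\RP^1}(f(\CP^1)) + \delta([f(\RP^1)]) \mod 2.$$

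It then remains to show $\delta([f(\RP^1)]) = b \mod 2$. Since $[f(\RP^1)] = a[\R D_1] + b[\R D_2]$ in $H_1(\R Q; \Z/2)$ and $\delta$ is linear, it suffices to compute $\delta$ on each $[\R D_i]$. Applying the identity just derived to the lines $D_i$ themselves (which are smooth, so $s_{\RP^1\times\RP^1}(\R D_i) = 0$), I get $\delta([\R D_i]) = s(\R\N'_i)$, and Proposition \ref{prop:lines} with the positive basis convention gives $s(\R\N'_1) = 0$ and $s(\R\N'_2) = 1$. Hence $\delta([\R D_1]) = 0$, $\delta([\R D_2]) = 1$, and the claimed formula follows.

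The main obstacle will be the orientation bookkeeping in the frame decomposition: ensuring that the $\sigma_\N$ produced by positivity of $(\sigma_T, \sigma_E, \sigma_\N)$ in $T\RP^3|_{\gamma}$ coincides, up to positive direction, with the reference section of $\R\N_Q$ determined by $\phi_\N$. This requires tracking the Riemannian-metric identification of $(\R\N'_\C)^\perp \subset \R\N_\R$ with $f^*\R\N_Q$, together with the compatibility between the orientation of $T\RP^3$ given by $\phi_0$ and the direct-sum orientation of $T\R Q \oplus \R\N_Q$ induced by $\phi_{\R Q}$ and $\phi_\N$.
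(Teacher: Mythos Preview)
Your proof is correct and takes a somewhat different, more structural route than the paper's. The paper proceeds by smoothing the nodes of $f(\RP^1)$ into a disjoint union of embedded circles $\gamma_i\subset\R Q$ (exactly as in the proof of Lemma~\ref{lem:w2}), then asserts the $\pi_1(SO_3(\R))$-class contributed by each circle directly: a null-homologous circle contributes $1$, a primitive circle of class $p[\R D_1]+q[\R D_2]$ contributes $q$; summing and repeating the arithmetic at the end of Lemma~\ref{lem:w2} yields the formula. You instead package the discrepancy between $\phi_0$ and a product trivialisation of $T\RP^3|_{\R Q}$ into a global class $\delta\in H^1(\R Q;\Z/2)$, obtain the identity $s(\R\N'_\C)=s_{\RP^1\times\RP^1}(f(\CP^1))+\delta([f(\RP^1)])$ from Lemma~\ref{lem:w2}, and then evaluate $\delta$ on the basis $[\R D_1],[\R D_2]$ by feeding the two lines back into that identity and invoking Proposition~\ref{prop:lines}. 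The two arguments carry the same content---the paper's per-circle contributions are exactly your $\delta$ on primitive classes, plus the Gau{\ss} term on null-homologous ones---but your formulation makes the linearity over $H_1(\R Q;\Z/2)$ explicit and cleanly isolates Lemma~\ref{lem:w2} and Proposition~\ref{prop:lines} as the two inputs. The orientation issue you flag at the end is genuine but already handled by your choices: once $\phi_{\mathrm{prod}}$ shares the orientation of $\phi_0$ and $(\sigma_T,\sigma_E)$ is positive in $T\R Q$, the third vector $\sigma_\N$ is forced to be positive in $\R\N_Q$, so the $SO_2\hookrightarrow SO_3$ factorisation goes through as stated.
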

\begin{proof}
The proof is similar to the proof of Lemma \ref{lem:w2}.
Equip $\RP^1$ with some orientation and smooth
 each node of $f(\RP^1)$, as depicted in
Figure \ref{fig:node}, in order to
 obtain a collection $\gamma$ of oriented circles embedded
in $\RP^1\times\RP^1$. 
The loop in $\pi_1(SO_3(\R))$ defined by  $\R\N'_\C$ is freely
homotopic to the product of the loops   in $\pi_1(SO_3(\R))$ defined by
$T\R Q_{|\gamma_i}/T\gamma_i$ for $\gamma_i$ ranging over elements of $\gamma$. 
Any loop in $\gamma$ realising the 0 class (resp. the class
$p[\R D_1] + q[\R D_2]$ with gcd$(p,q)$=1) in
$H_1(\RP^1\times\RP^1;\Z)$
defines a non-trivial loop (resp.  $q$ times the non-trivial loop) 
in  $\pi_1(SO_3(\R))$.
Now the end of the proof is similar
to the end of
 the proof of Lemma \ref{lem:w2}.
\end{proof}

\begin{prop}\label{prop:L or not}
Let $f:\CP^1\to \CP^3$ be a real algebraic 
immersion such that $f(\CP^1)$ is contained in $Q$, where it
has  a bidegree
$(a,b)$ in the positive basis, with $a\ne b$. Then, the
 holomorphic real line
subbundle 
$\R\N'_\C$ of~$\R\N_\C$ 
realises the real isotopy class
 $L(f)$ if and
only if $a>b$.
\end{prop}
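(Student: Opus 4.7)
My plan is the following. By Proposition~\ref{prop:balanced2}, the immersion $f$ is balanced, so $\N_\C\cong\mathcal O_{\CP^1}(2d-1)\oplus\mathcal O_{\CP^1}(2d-1)$, and the adjunction computation carried out in that proof also gives $\deg\N'_\C=2d-2$. Hence $\N'_\C$ is a real holomorphic line subbundle of~$\N_\C$ of the right degree, and its real part $\R\N'_\C$ realizes exactly one of the two isotopy classes $L(f)$ or $\overline L(f)$ described in Section~\ref{sec:w3}; the content of the proposition is to identify which one in terms of $(a,b)$.

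The strategy is to describe $\N'_\C$ explicitly inside a real holomorphic splitting of~$\N_\C$. Once such a splitting $\N_\C\cong\mathcal O_{\CP^1}(2d-1)\oplus\mathcal O_{\CP^1}(2d-1)$ is fixed, with local coordinates $(u,z,w)$ as in Section~\ref{sec:w3}, the subbundle $\N'_\C$ is cut out by an equation of the form $w=F(u)z$ for some real M\"obius transformation $F:\CP^1\to\CP^1$. By the very definition of $L(f)$ and $\overline L(f)$ in Section~\ref{sec:w3}, $\R\N'_\C$ realizes $L(f)$ if and only if $F$ is orientation-preserving on~$\RP^1$, i.e.\ iff $F'>0$ on~$\RP^1$. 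So the task reduces to computing the sign of $F'$ as a function of~$(a,b)$.

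To do this I would parametrize $f$ in affine charts on $Q\cong\CP^1\times\CP^1$ as $u\mapsto(g(u),h(u))$ with $g,h$ real rational of degrees $a$ and $b$ respectively, and compose with the Segre embedding $Q\hookrightarrow\CP^3$. The tangent direction $df/du$ and the exact sequence $0\to\N'_\C\to\N_\C\to f^*\N_Q\to 0$ from the beginning of Section~\ref{sec:quadric} then give an explicit description of the inclusion $\N'_\C\hookrightarrow\N_\C$ in a trivialization coming from the coordinate vector fields on $\CP^3$. Passing to a real holomorphic splitting compatible with the positive basis $(D_1,D_2)$ of $H_2(Q,\Z)$, the M\"obius~$F$ reads off and a Wronskian-type expression shows that the sign of~$F'$ on~$\RP^1$ is that of $a-b$. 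The base cases $(a,b)\in\{(1,0),(0,1)\}$, in which $f$ parametrizes $D_1$ or $D_2$, recover Proposition~\ref{prop:lines} and the positive basis convention ($\R\N'_1=L(D_1)$, $\R\N'_2=\overline L(D_2)$) and serve as a sign-consistency check for the general computation.

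The hard part will be the careful orientation bookkeeping: the orientation of $\R\N_\C$ inherited from $\RP^3$ via the trivialization $\phi_0$, the complex orientation of~$\N'_\C$, and the positive basis convention on $H_2(Q,\Z)$ must all be kept compatible throughout. Once these alignments are nailed down, the Wronskian computation will show that the sign of~$F'$ equals that of~$a-b$, giving the claim that $\R\N'_\C$ realizes $L(f)$ if and only if $a>b$.
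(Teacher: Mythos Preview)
Your proposal is an outline rather than a proof, and the approach differs substantially from the paper's. You plan a direct coordinate computation: parametrize $f$ via the Segre embedding, write the subbundle $\N'_\C\subset\N_\C$ as $w=F(u)z$ in an explicit real splitting, and decide the orientation of $F|_{\RP^1}$ from a Wronskian in $g,h$. The paper instead argues geometrically: choosing an auxiliary real elliptic curve $C_0\subset Q$ through a point $p_0=f(u_0)$, the first-order deformation $f_\epsilon$ in the pencil $\Q$ determined by $C_0$ produces a degree $2d-1$ real subbundle $E\subset\N_\C$ whose fiber at $u_0$ agrees with that of $\N'_\C$; the direction in which $\R\N'_\C$ rotates relative to $\R E$ is then read off from the motion of the intersection point $p_\epsilon\in C_0$, which is governed by $(a-b)E_{1,\epsilon}$ in $\Pic(C_0)$ and hence purely by $\operatorname{sign}(a-b)$. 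Both arguments finish by invoking the base cases $(1,0),(0,1)$ via Proposition~\ref{prop:lines}.

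The gap in your proposal is that its two central steps are only asserted. First, ``a real holomorphic splitting compatible with the positive basis'' has no meaning as stated: the positive basis of $H_2(Q;\Z)$ is defined through the spin trivialization $\phi_0$ of $T\RP^3$ and the normalization $s_{\RP^3}(D)=0$, not through any splitting of $\N_\C$; transporting that global spin datum into your local coordinate frame is precisely the ``orientation bookkeeping'' you postpone, and it is essentially the whole content of the proposition. Second, you never exhibit the Wronskian expression nor explain why its sign on $\RP^1$ depends only on $\operatorname{sign}(a-b)$ rather than on the particular real rational maps $g,h$ or the affine chart. Until those two points are carried out, what you have is a plausible strategy, not a proof; the paper's deformation argument sidesteps both difficulties by comparing directly with the line cases through the Picard group of $C_0$.
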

\begin{proof}
By convention, we have 
$s_{\RP^3}(D)=0$ for a real line $D$ in $\CP^3$. 
Hence, according to 
Proposition~\ref{prop:lines}, 
 the proposition is true for $(a,b)=(1,0)$ and $(a,b)=(0,1)$. 

Recall how  the two   isotopy classes $L(f)$ and
$\overline L(f)$ are characterised: in local holomorphic
coordinates on
$\N_\C$ defining a real splitting  $\N_\C=\mathcal
O_{\CP^1}(2d-1)\oplus\mathcal O_{\CP^1}(2d-1)$, the classes
$L(f)$ and~$\overline L(f)$  rotate in
different directions in $\R\N_\C$. The positive direction of rotation
is determined by an orientation of $\R P^1$ and of $\R P^3$; the class
$L(f)$ is by definition the one which rotates in the positive
direction.
Hence to prove  the proposition, it is enough
 to find a point $u_0\in\RP^1$ 
and a real holomorphic subbundle $E$ of $\N_\C$ of degree $2d-1$ such that
\begin{itemize}
\item the fibers
of $E$ and $\N'_\C$ over $u_0$ coincide and
\item  we can determine the mutual position of the real parts of the fibers of these
  bundles over a point $u\in\RP^1$ in a neighbourhood of $u_0$.
\end{itemize}

Let 
$C_0\subset Q$ be a real elliptic curve of bidegree $(2,2)$,
with $\R C_0\ne 0$, that intersects $f(\CP^1)$ transversely  at
some point $p_0=f(u_0)$ with $u_0\in \RP^1 $.
 We denote by $\Q$ the real 
 pencil of quadrics defined by $C_0$. In particular, $\Q$ is the
 realisation of a first order real deformation $Q_\epsilon$ of the
 real quadric $Q$ in $\CP^3$. Denoting by $D_{i,\epsilon}$ the deformation of $D_i$ in
 $Q_\epsilon$ and by $E_{i,\epsilon}\in\Pic_2(C_0)$ the class
 realised by $D_{i,\epsilon}\cap C_0$, recall that
\begin{equation}\label{equ:diff}
\frac{d E_{i,\epsilon}}{d \epsilon}|_{\epsilon=0}\ne 0.
\end{equation}

Let $f_\epsilon$ be a first order real
 deformation of $f$ in the pencil $\Q$ such that $f_\epsilon(\CP^1)$
 passes through  the $2d-1$ points of $f(\CP^1)\cap
 C_0\setminus\{p_0\}$ for all $\epsilon$.
This deformation corresponds to a non-null real holomorphic section 
$\sigma:\CP^1\to \N_\C$ that vanishes  on  $f^{-1}\left( C_0\setminus\{p_0\}\right)$.
Recall that,
since $a\ne b$,  the class realised by $f_\epsilon(\CP^1)\cap C_0$ in
$\Pic_{2d}(C_0)$ is not constant. In particular, $\sigma(u_0)\ne 0$.
Let $E$ be the real holomorphic subbundle  of $\N_\C$ of degree
$2d-1$ whose fiber over $u_0$ is the line generated by~$\sigma(u_0)$. 

\medskip
{\bf Claim 1:} $\sigma$ is a section of $E$. Indeed, $\sigma$ induces
a holomorphic section of the bundle $\N_\C/E$ that vanishes at 
the $2d$ points of 
$f^{-1}\left(C_0\right)$. Since the latter bundle has
degree $2d-1$, this induced section must be the null section i.e. $\sigma(u)\in E$ for all $u\in\CP^1$.

\medskip
{\bf Claim 2:} the fibers of $E$ and $\N'_\C$ over $u_0$ coincide.
Indeed, the vector
$\sigma(u_0)$ corresponds to the preimage
of the deformation of $p_0$ in
$f_\epsilon(\CP^1)\cap C_0$. 
Since this deformation stays in $C_0$ by definition  and   $C_0$
sits in $Q$, we must have 
$\sigma(u_0)\in  \N'_\C$.


\medskip
{\bf Claim 3:} the   isotopy class realised by $\R\N'_\C$ is determined by the
direction of the vector $\sigma(u_0)$.
 Let us denote by $\sigma_Q$ the holomorphic section of $T\CP^3/T Q$
 corresponding to the deformation $\Q$. Since both $\RP^3$ and $\R Q$
 are orientable manifolds, we may 
also fix  a smooth nowhere vanishing section $\lambda_Q$ of
 the $\R$-vector bundle  $T\RP^3/T\R Q$ (see Figure \ref{fig:L or not}a for a
 local picture at $p_0$). 
The section~$\sigma_Q$ vanishes along $C_0$ and we denote by $\R Q_+$
the region of $\R Q\setminus \R C_0$, where $\sigma_Q$ and $\lambda_Q$
have the same direction (see Figure \ref{fig:L or not}b). The choices
of $\lambda_Q$ and $\sigma_Q$ induce an
orientation on the real part of our source curve: we orient $\RP^1$ so
that $f(\RP^1)$ points toward $\R Q_+$ at $f(u_0)$. 
The choice of a Riemannian metric on $\RP^3$ identifies the $\R$-bundle
$f^*(T\RP^3/T \R Q)$ 
with the orthogonal of $\R \N'_\C$ in $\R \N_\C$. With this
identification, the vector $\sigma(u)$ for $u\in\RP^1$ close enough to
$u_0$  decomposes as
$$\sigma(u)=  g_1(u)\sigma(u_0)  + g_2(u)\lambda_Q(u),$$
where $g_1$ is a smooth function
with $g_1(u_0)=1$ and $g_2$ is a smooth  function  vanishing at~$u_0$  and
positive for $u>u_0$ (the local ordering of $\RP^1$ at $u_0$ is given
by the orientation of $\RP^1$). In other words, the choice of
$\lambda_Q$ and $\sigma_Q$ determine an orientation of the fiber of $\R\N_\C$ over
$u_0$ together with a half-plane $\Pi\subset \R\N_{\C| u_0}\setminus
\R\N'_{\C| u_0}$  containing $\sigma(u)$ when $u>u_0$ (see Figure
\ref{fig:L or not} c, d).  Now
clearly, 
the direction in which $\R \N'_\C$ rotates with respect to $\R E$ depends only on the direction of $\sigma(u_0)$.
\begin{figure}[h!]
\centering
\begin{tabular}{cc} 
\includegraphics[height=4cm, angle=0]{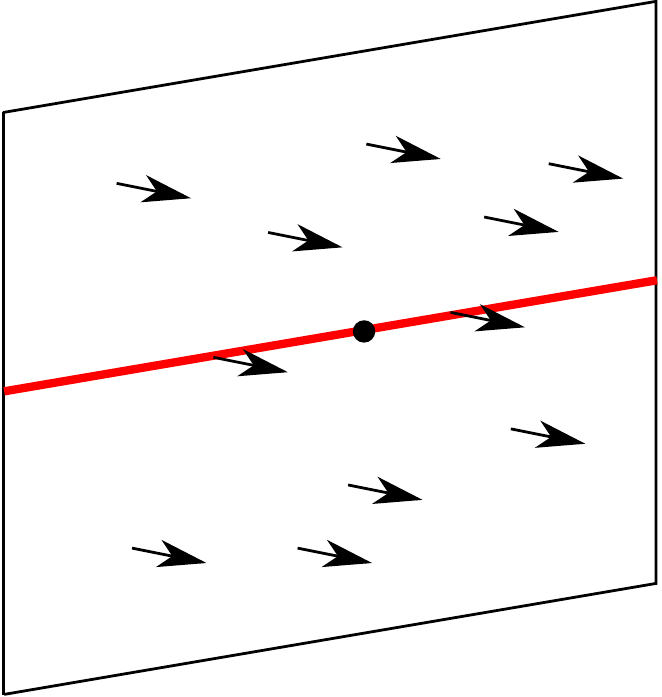}
\put(-110, 105){$\R Q$}
\put(5, 70){$\R C_0$}
\put(-50, 50){$p_0$}
\put(-80, 70){$\lambda_Q$}
& \includegraphics[height=4cm, angle=0]{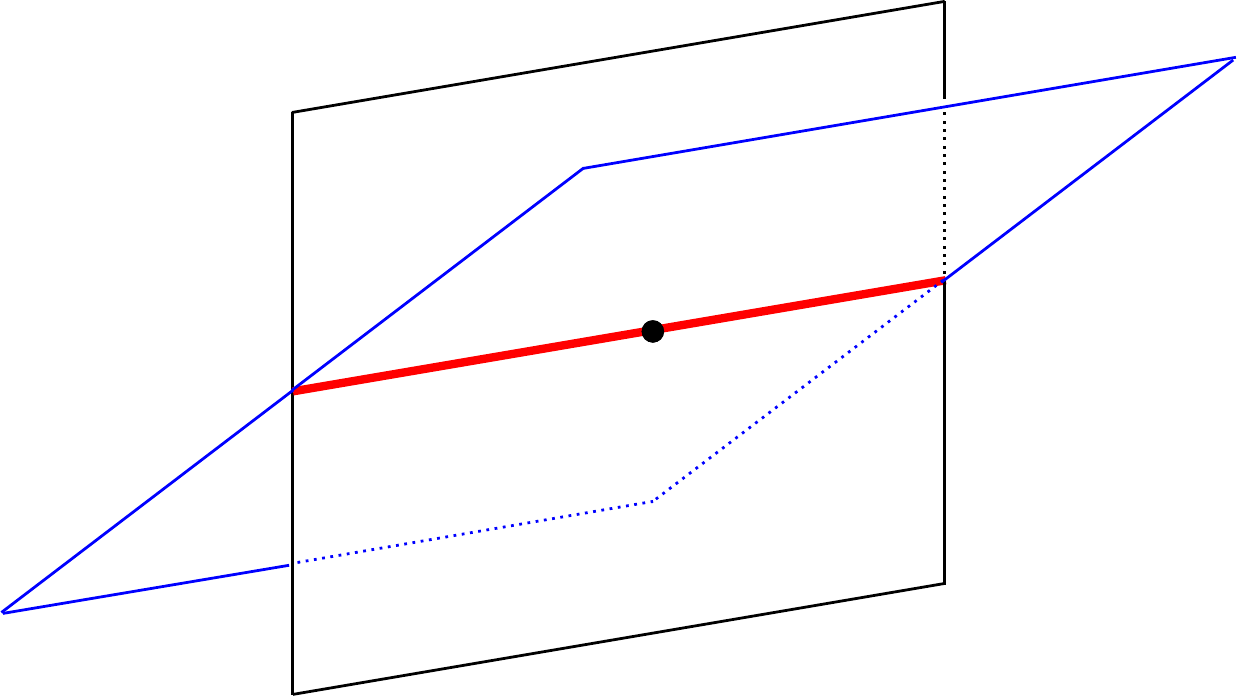}
\put(-150, 85){$\R Q_+$}
\put(3, 95){$\R Q_{\epsilon}$}
\\ a) & b) $\epsilon>0$ \\ \\
\includegraphics[height=4cm, angle=0]{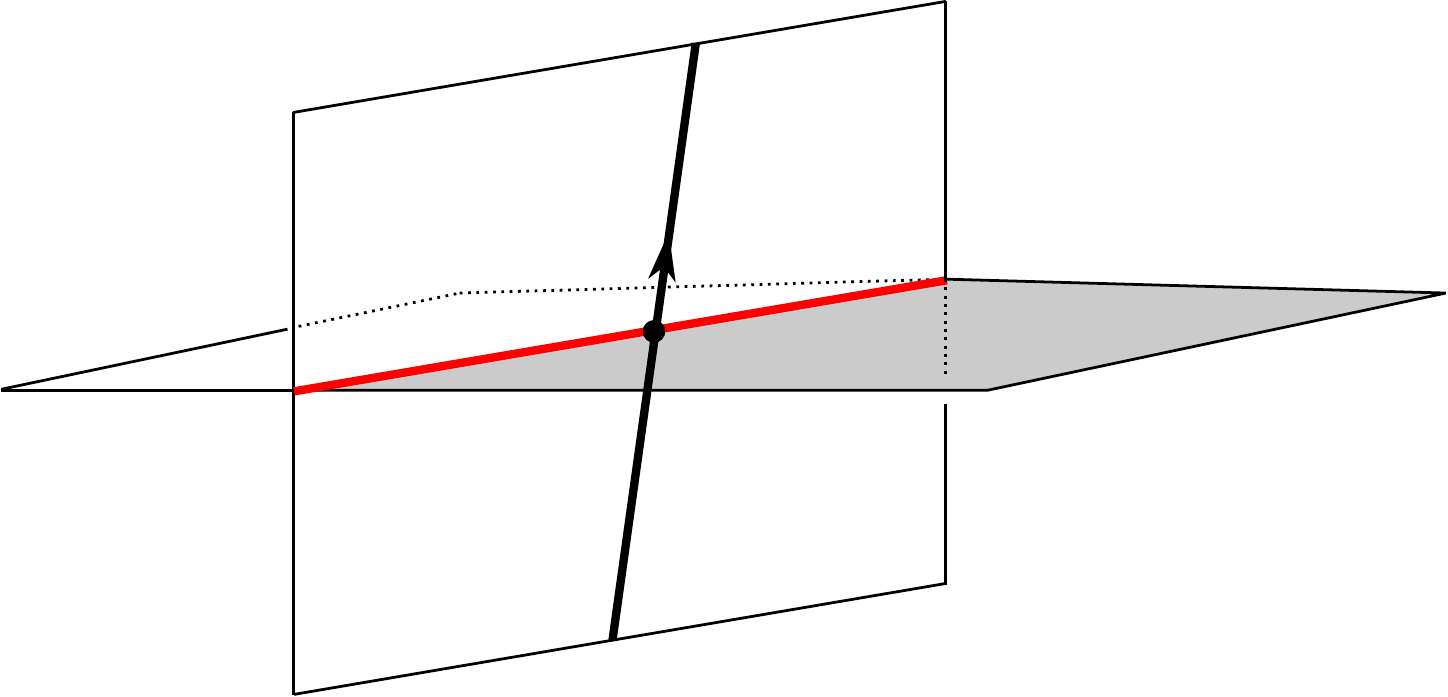}
\put(-120, 85){$f(\R P^1)$}
\put(-70, 55){$\Pi$}
\put(-270, 55){$\R\N_{\C| u_0}$}
& \includegraphics[height=4cm, angle=0]{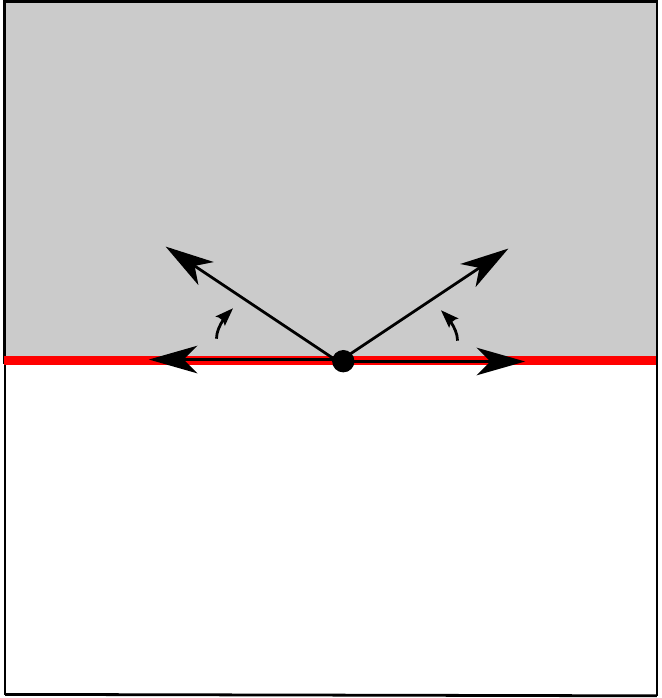}
\put(-90, 40){$\sigma(u_0)$}
\put(-90, 80){$\sigma(u)$}
\put(-40, 40){$\sigma(u_0)$}
\put(-40, 80){$\sigma(u)$}
\put(-100, 100){$\Pi$}
\put(5, 5){$\R\N_{\C| u_0}$}
\put(5, 55){$\R\N'_{\C| u_0}$}
\\
\\  c) & d)
\end{tabular}
\caption{Determining the  isotopy class of $\R\N'_\C$}
\label{fig:L or not}
\end{figure}

\medskip
As in the proof of Proposition \ref{prop:balanced2}, 
the deformation $p_\epsilon$ of $p_0$ as  an intersection point of 
$f_\epsilon(\CP^1)$ and $C_0$ is determined by the condition that
$f_{\epsilon}(\CP^1)\cap C_0$ has to realise the class 
$(a-b)E_{1,\epsilon} + bh$ in $\Pic_{2d}(C_0)$.
Since the class $h$ is constant, Inequality $(\ref{equ:diff})$
implies  that the direction of the vector
$$\frac{d p_{\epsilon}}{d \epsilon}|_{\epsilon=0}$$
is the same as for the line $D_1$ if $a>b$ and opposite if $a<b$.
Since the statement of the proposition holds for the classes $(1,0)$
and $(0,1)$, the proof is complete. 
\end{proof}

\begin{cor}\label{prop:W2-W3}
Let $f:\CP^1\to \CP^3$ be a real algebraic 
immersion such that $f(\CP^1)$ is contained in~$Q$, where it
has a bidegree
$(a,b)$ in the positive basis, with $a\ne b$. Then, one has
$$s_{\RP^3}(f(\CP^1))= \left\{\begin{array}{ll}
s_{\RP^1\times\RP^1}(f(\CP^1)) + b+1\,,& \mbox{if $a<b$,}
\\
\\  s_{\RP^1\times\RP^1}(f(\CP^1)) +b\,,& \mbox{if $a>b$.}

\end{array}\right.$$

\end{cor}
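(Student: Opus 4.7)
The plan is to combine the two main technical results of the section, namely Corollary \ref{prop:eps-eps'} and Proposition \ref{prop:L or not}, with the very definition of $s_{\RP^3}$, working modulo $2$ throughout.

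First, I would recall that by definition $s_{\RP^3}(f(\CP^1)) = s(L(f))$, and that from the construction of the classes $L(f)$ and $\overline L(f)$ (they differ by one full rotation, a generator of $\pi_1(SO_2(\R))$), one has $s(\overline L(f)) = s(L(f)) + 1 \pmod 2$. Next, the subbundle $\R\N'_\C \subset \R\N_\C$ is, by Proposition \ref{prop:L or not}, in the isotopy class $L(f)$ when $a>b$, and in the isotopy class $\overline L(f)$ when $a<b$. Therefore
\[
s(\R\N'_\C) = \begin{cases} s_{\RP^3}(f(\CP^1)) & \text{if } a>b, \\ s_{\RP^3}(f(\CP^1)) + 1 & \text{if } a<b, \end{cases}
\]
modulo $2$.

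To conclude I would substitute Corollary \ref{prop:eps-eps'}, which states that $s(\R\N'_\C) = s_{\RP^1\times\RP^1}(f(\CP^1)) + b \pmod 2$. Solving for $s_{\RP^3}(f(\CP^1))$ in each case immediately gives the two displayed formulas in the statement of the corollary.

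There is no real obstacle here; the assertion is essentially a bookkeeping combination of the two preceding results, and the only thing to be careful about is to track the parity contribution coming from the discrepancy between $L(f)$ and $\overline L(f)$, which accounts for the extra $+1$ in the case $a<b$.
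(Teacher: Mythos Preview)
Your argument is correct and is exactly the intended one: the paper states this as a corollary with no proof, since it is an immediate combination of Corollary~\ref{prop:eps-eps'}, Proposition~\ref{prop:L or not}, and the relation $s(L(f))\ne s(\overline L(f))$. The only point you might make explicit is that $\N'_\C$ has degree $2d-2$ (see the proof of Proposition~\ref{prop:balanced2}), so $\R\N'_\C$ must lie in one of the two isotopy classes $L(f)$, $\overline L(f)$; this is what allows you to conclude from Proposition~\ref{prop:L or not} that it is $\overline L(f)$ when $a<b$.
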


\section{Proofs of Theorems    \ref{thm:main} and \ref{thm:kollar}}\label{sec:proof}
In this section we recall in  Proposition \ref{prop:kollar}
the main statement for our purposes from \cite{Kol14} and
we  prove
  transversality  results needed to deduce Theorem
\ref{thm:kollar} from Proposition~\ref{prop:kollar}. We finish the section with the proof of Theorem \ref{thm:main}.\\

Let $C_0$ be a non-degenerate elliptic curve of degree 4 in $\CP^3$ and   
$\x\subset C_0$ be a configuration of $2d$ distinct points. We denote 
by $\Q$ the pencil of quadrics induced by $C_0$ and by $\overline \CC(\x)$ the 
set of connected algebraic curves of arithmetic genus 0 and 
degree $d$ in $\CP^3$ that contain $\x$.
Recall that $h\in\Pic_4(C_0)$ denotes the hyperplane section.

\begin{prop}[{\cite[Proposition 3]{Kol14}}] \label{prop:kollar}
Suppose that the points in $\x$ are in general position in $C_0$.
Then, every curve in $\overline \CC(\x)$ is irreducible and 
contained in a quadric of~$\Q$. Furthermore, 
the quadrics of $\Q$, that contain a curve in $\overline \CC(\x)$, are exactly 
the images
under $\pi_\Q$ of the solutions $E\in\Pic_2(C_0)$ of the equation
\begin{equation}\label{equ:pencil}
(d-2a)E=  (d-a)h -\x,
\end{equation}
with $0\le a<\frac{d}2$. \\

\noindent
If $Q$ is such a quadric and   $C$ is 
a curve in $\overline \CC(\x)$, then $C$ is linearly equivalent in $Q$
to $aD_1 + (d-a)D_2$, where 
  $D_1$ (resp. $D_2$) is a line in $Q$ whose intersection with $C_0$ is $E$
  (resp. $h-E$). Conversely, any irreducible rational curve $C$ in
$Q$, linearly equivalent to $aD_1+ (d-a)D_2$ and containing $2d-1$ of
the points in $\x$, is in $\overline \CC(\x)$ (i.e. contains $\x$).
\end{prop}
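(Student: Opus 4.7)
The hypothesis $\x\subset C_0$ makes the pencil $\Q$ intrinsic to the configuration and forces every $C\in\overline\CC(\x)$ to interact rigidly with it. The core of the proof is a divisor-class computation inside $\Pic(C_0)$, once containment in a quadric of $\Q$ has been established.

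The first step is to show that every $C\in\overline\CC(\x)$ lies in some $Q_t\in\Q$. Arguing by contradiction, if not, then $C\cap Q_t$ is zero-dimensional of length $C\cdot Q_t=2d$ for every $t\in\P^1$ and contains $\x\subset C_0\subset Q_t$; hence $C\cap Q_t=\x$ scheme-theoretically with multiplicity one at each $p\in\x$. The tangent to $C$ at each $p\in\x$ would then have to avoid every hyperplane $T_pQ_t$. But these planes form a pencil pivoting around the line $T_pC_0$ and sweep out all of $T_p\CP^3$, so one would need $T_pC\subset T_pC_0$, i.e.\ $C$ tangent to $C_0$ at $p$. For $\x$ generic in $C_0$ this tangency is excluded, yielding the desired contradiction.

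Once $C\subset Q_E$ for some $E\in\Pic_2(C_0)$, write $[C]=aD_1+bD_2$ in $\Pic(Q)$ with $a+b=d$, and after relabeling $D_1\leftrightarrow D_2$ assume $0\le a<d/2$. Restricting the divisor class to $C_0$ gives
\[
[C\cap C_0]\,=\,aE+b(h-E)\,=\,(d-a)h-(d-2a)E \quad\text{in }\Pic_{2d}(C_0).
\]
Since $\x\subset C\cap C_0$ and both are effective of degree $2d$, they coincide as divisors, so $(d-2a)E=(d-a)h-\x$, which is exactly equation~(\ref{equ:pencil}). Multiplication by $d-2a$ being an isogeny of $C_0$ of degree $(d-2a)^2$ gives $(d-2a)^2$ solutions for $E$ per admissible $a$. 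For the converse, an irreducible rational $C\subset Q_E$ of bidegree $(a,d-a)$ through $2d-1$ points of $\x$ satisfies $[C\cap C_0]=[\x]$ by the same computation; since $C\cap C_0$ and $\x$ share $2d-1$ of their points and are linearly equivalent of degree $2d$, their difference is a principal divisor $p-q$ of degree $0$, which on an elliptic curve must be trivial, so $p=q$ and $\x\subset C$.

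Irreducibility of $C$ is a standard dimension count: a reducible rational $C=\sum_{i=1}^k C_i$ of total degree $d$ through $\x$ forces each component $C_i$ to absorb exactly its expected $2d_i$ points and be rigid, with $k-1$ further codimension-$1$ incidence constraints to assemble a connected tree, so the expected dimension of the family is $1-k<0$ for $k\ge 2$. The main obstacle throughout is the genericity input for the first and last steps: the configuration $\x$ lies on the one-dimensional locus $C_0\subset\CP^3$, so one cannot directly invoke general position in $(\CP^3)^{2d}$. One must check that, after restricting the relevant evaluation maps to $(C_0)^{2d}$, the loci of tangent and of reducible curves still have strictly smaller-than-expected image, which is the geometric heart of \cite[Proposition~3]{Kol14}.
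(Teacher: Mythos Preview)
The paper does not supply a proof of this proposition; it is quoted from \cite[Proposition~3]{Kol14} and used as a black box. So there is no argument in the paper against which to compare yours.

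On your sketch itself: the $\Pic(C_0)$ computation and the converse direction are correct and are exactly the mechanism behind the statement. Your first step, however, is garbled. From $C\cap Q_t=\x$ with multiplicity one you correctly deduce that $T_pC\not\subset T_pQ_t$ for every $t$, and you correctly note that the pencil of tangent hyperplanes $T_pQ_t$ (pivoting about $T_pC_0$) sweeps out all of $T_p\CP^3$; but then the contradiction is immediate --- no line $T_pC$ can avoid every member of a pencil of planes whose union is the whole $3$-space. Your conclusion ``one would need $T_pC\subset T_pC_0$'' is backwards: that would place $T_pC$ inside \emph{every} $T_pQ_t$, the opposite of what you are assuming. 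In particular no genericity of $\x$ is needed at this step. A cleaner route bypasses tangent spaces entirely: if no component of $C$ lies in any $Q_t$, then $Q_0|_C$ and $Q_\infty|_C$ are nonzero sections of $\mathcal O_C(2)$ with the same degree-$2d$ zero divisor~$\x$, hence proportional (since $C$ is connected of arithmetic genus~$0$), which forces $C$ into the quadric $Q_0-\lambda Q_\infty\in\Q$.

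Your final paragraph correctly isolates the genuine difficulty: the dimension count excluding reducible curves is routine for $\x$ generic in $(\CP^3)^{2d}$, but here $\x$ is constrained to $C_0^{2d}$, and one must check that the reducible locus still has deficient image after this restriction. That verification is indeed the substance of Koll\'ar's argument, and you are right to flag it rather than pretend it is automatic.
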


Any two solutions of Equation $(\ref{equ:pencil})$ differ by a torsion
point of order $d-2a$; in particular, Equation $(\ref{equ:pencil})$
has exactly $(d-2a)^2$ solutions in $C_0$.
 Let $\mathcal
M^*_{0,2d}(\CP^3,d)$ be the space of stable maps 
$f\!:\!(\CP^1,x_1,\ldots,x_{2d})\to\CP^3$
from $\CP^1$ with $2d$ marked points $x_1,\ldots,x_{2d}$, considered up to
reparametrisation, whose image has degree $d$. 
The evaluation map $ev$
is defined as 
$$\begin{array}{cccc}
ev:& \mathcal M^*_{0,2d}(\CP^3,d) &\longrightarrow & (\CP^3)^{2d}
\\  & f &\longmapsto & (f(x_1),\ldots,f(x_{2d})).
\end{array}$$

To complete the proof of Theorem \ref{thm:kollar}, it remains to 
prove that
one can choose $\x\subset C_0$ so that 
the map $ev$ is regular at every curve in $\overline
\CC(\x)$ and that each quadric of $\Q$, solution to Equation
$(\ref{equ:pencil})$, contains exactly 
$GW_{\C P^1\times\C  P^1}\left(a,d-a\right)$  elements of $\overline
\CC(\x)$.
This is done in the next two propositions. 
Denote by
$$
V_{n}\subset C_0^{n}\subset (\CP^3)^{n}
$$
 the set of
configurations of $n$ distinct points on $C_0$.

\begin{prop}\label{prop:generic quadric}
Let $Q$  be a quadric in $\Q$.
Given   an integer $a\in\{0,\ldots, d\}$ and
 $\y$ in $V_{2d-1}$, we denote by
$\overline \CC'_{Q,a}(\y)$ the set  of stable maps
 $f:(C,x_1,\ldots,x_{2d-1})\to Q$,
with $C$ a connected nodal curve  of arithmetic genus 0, such
that $f(C)$ has bidegree $(a,d-a)$ and
$f(\{x_1,\ldots,x_{2d-1}\})=\y\in V_{2d-1}$.
Then, there exists a dense open subset $U_{2d-1}\subset V_{2d-1}$ such that 
for every  choice of  $a$ and $\y\in U_{2d-1}$,
and for every stable map $f:(C,x_1,\ldots,x_{2d-1})\to Q$ in 
$\overline \CC'_{Q,a}(\y)$, one has
\begin{itemize}
\item $C$ is non-singular,
\item $f$ is an immersion.
\end{itemize}

\noindent
In particular, for every configuration $\y\in U_{2d-1}$, 
the quadric $Q$
contains exactly
$ GW_{\C P^1\times\C  P^1}(a,d-a)$  rational curves of bidegree $(a,d-a)$ and passing through $\y$.
\end{prop}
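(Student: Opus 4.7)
The plan is to combine a stratified dimension count on the moduli space of stable maps with a transversality argument at immersive maps. Let $\mathcal{M} = \overline{\mathcal{M}}_{0, 2d-1}(Q, (a, d-a))$ (of expected dimension $4d-2$) with evaluation $ev : \mathcal{M} \to Q^{2d-1}$, and let $\mathcal{M}^\ast \subset \mathcal{M}$ be the open smooth locus of immersions $f : \CP^1 \to Q$ with distinct marked points. Standard genericity for $\CP^1 \times \CP^1$ furnishes a dense open $W \subset Q^{2d-1}$ over which $ev$ restricts to a finite étale cover of $\mathcal{M}^\ast$ of degree $GW_{\C P^1 \times \C P^1}(a, d-a)$. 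The proposition will follow by showing that $U_{2d-1} := W \cap V_{2d-1}$ is dense in $V_{2d-1}$ and that no stable map in $\mathcal{M} \setminus \mathcal{M}^\ast$ lies above this dense open.

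For the first step, I would stratify the complement $B = \mathcal{M} \setminus \mathcal{M}^\ast$ by topological type of degeneration (reducible domains with prescribed distributions of bidegrees and marked points, contracted components, non-immersive maps with given normal-bundle jump). Each such stratum $B'$ has dimension at most $4d-3$: for a simple splitting into two components of bidegrees $(a_1,b_1)$ and $(a_2,b_2)$ summing to $(a, d-a)$ with $n_1 + n_2 = 2d-1$ marked points, the dimensions $2(a_i+b_i) + n_i$ summed and corrected by the two-codimensional gluing condition give exactly $4d-3$, and analogous bounds hold for deeper strata. Combined with the transversality argument in the next step, each $ev(B') \cap C_0^{2d-1}$ has dimension at most $2d-2 < 2d-1 = \dim C_0^{2d-1}$, so the union over the finitely many strata is a proper closed subset of $C_0^{2d-1}$.

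For the transversality and the count, consider a smooth immersion $f$ with $ev(f) = \y \in C_0^{2d-1}$. The differential of $ev$ at $f$ is governed by the space of sections of the normal bundle $\N'_\C = f^*TQ / T\CP^1$. Crucially, $f(\CP^1)$ is rational while $C_0$ is elliptic, so $f(\CP^1) \ne C_0$, and the two curves (of bidegrees $(a, d-a)$ and $(2,2)$ in $Q$) meet transversely at their $2d$ intersection points, in particular at each $y_i = f(x_i)$. Hence the tangent direction of $f(\CP^1)$ at $y_i$ is not contained in $T_{y_i} C_0$, so deformations of $f$ moving $f(x_i)$ along $f(\CP^1)$ while fixing the remaining marked points produce tangent vectors transverse to $T_\y C_0^{2d-1}$ inside $T_\y Q^{2d-1}$. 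Thus $ev|_{\mathcal{M}^\ast}$ is transverse to $C_0^{2d-1}$ at $f$, and the restricted map $ev^{-1}(C_0^{2d-1}) \cap \mathcal{M}^\ast \to C_0^{2d-1}$ is étale on a dense open, with generic degree equal to that of $ev|_{\mathcal{M}^\ast}$ over $Q^{2d-1}$, namely $GW_{\C P^1 \times \C P^1}(a, d-a)$.

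The main obstacle is the stratified dimension count of Step 1. A priori one could fear that some degenerate stratum, although of codimension $\ge 1$ in $\mathcal{M}$, maps dominantly to $C_0^{2d-1}$ when intersected with $ev^{-1}(C_0^{2d-1})$, which would spoil both the genericity statement and the identification of the count with the Gromov--Witten invariant. Resolving this relies on the observation that for each component of a reducible stable map the same transversality principle applies, since each component is a rational curve distinct from $C_0$; combined with the bidegree and marked-point dimension count, this rules out the pathology.
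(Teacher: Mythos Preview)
Your approach via moduli of stable maps is reasonable and genuinely different from the paper's, but the execution has a real gap in the boundary-stratum analysis.

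A minor point first: in Step~2 you assert that $f(\CP^1)$ and $C_0$ meet transversely at $2d$ points, but this does not follow just from $f(\CP^1)\ne C_0$; one of the $y_i$ could well be a tangency (you only know $2d-1$ of the intersection points are distinct, and the intersection number is $2d$). This is harmless, because $Q$ is convex and hence $ev|_{\mathcal M^*}$ is already a local isomorphism at every immersion, so it is automatically transverse to any subvariety of $Q^{2d-1}$; your argument there is simply unnecessary.

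The genuine gap is in your last paragraph. You correctly flag the danger that a codimension-one stratum $B'$ could still dominate $C_0^{2d-1}$, but the fix you propose --- applying ``the same transversality principle'' componentwise --- does not work as stated. On a boundary stratum the evaluation map has source of dimension at most $4d-3<4d-2=\dim Q^{2d-1}$ and is therefore never a submersion, so convexity alone cannot yield transversality to $C_0^{2d-1}$; and the intersection of each component with $C_0$ is no more automatically transverse than in the smooth case. To push your argument through one must track, for each distribution of marks, which component carries more than $2(a_j+b_j)-1$ of them and then use that such a component meets $C_0$ in at most $2(a_j+b_j)$ points. This can be made to work, but it is substantially more than what you wrote.

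The paper bypasses all of this with a simpler device. Instead of stable maps it uses the closure $\overline{\mathcal V}_a$ of the Severi variety of irreducible nodal rational curves inside the linear system $|aD_1+(d-a)D_2|$, of dimension $2d-1$. Since $C_0$ is elliptic it is never a component of such a curve, so intersection with $C_0$ gives an everywhere-regular map $\phi:\overline{\mathcal V}_a\to\mathcal U_a$, where $\mathcal U_a$ is the complete linear system on $C_0$ of degree $2d$ cut out by $|aD_1+(d-a)D_2|$, also of dimension $2d-1$. Then $\dim\phi(\overline{\mathcal V}_a\setminus\mathcal V_a)\le 2d-2$ by a pure dimension count, and Riemann--Roch on $C_0$ says every $\y\in V_{2d-1}$ determines a unique $[\y]\in\mathcal U_a$. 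Thus for generic $\y$ one has $[\y]\notin\phi(\overline{\mathcal V}_a\setminus\mathcal V_a)$, which is exactly the statement that no degenerate curve passes through $\y$ --- no transversality computation is needed at all. Your approach would generalise better to non-linear targets, but here the linear-system argument is both shorter and complete.
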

\begin{proof}
Let  $\mathcal V_a$ be the set of irreducible nodal rational curves 
in $Q$ of bidegree $(a,d-a)$. 
This is a quasiprojective subvariety of
dimension $2d-1$
of
the linear system $|aD_1 + (d-a)D_2|$ and we denote by
$\overline{\mathcal V}_a$
 its Zariski closure. We also denote by 
$\mathcal U_a$ the linear system on $C_0$ defined by the restriction  to
 $C_0$ of
 the divisor $aD_1 + (d-a)D_2$. 
Since $C_0$ has bidegree $(2,2)$ in $Q$, the linear system $\mathcal
U_a$ has degree $2d$.
By the Riemann-Roch theorem, every element of
$\mathcal U_a$
 is
determined by $2d-1$ of its points in $C_0$. 
In particular, every element $\y$ of $V_{2d-1}$ induces an element
$[\y]$ of $ \mathcal U_a $.
Since $C_0$ cannot be a component of a curve in 
$\overline{\mathcal V}_a$, the map
$$\begin{array}{cccc}
\phi: & \overline{\mathcal V}_a&\longrightarrow & \mathcal U_a 
\\ & C_1 &\longmapsto & C_1\cap C_0
\end{array}$$
 is well defined, generically finite, and 
 $\dim \phi(\overline{\mathcal
  V}_a\setminus\mathcal V_a)\le 2d-2$. 
Hence if $\y\in V_{2d-1}$ is
 so that 
$[\y]\notin\phi(\overline{\mathcal
  V}_a\setminus\mathcal V_a)$, then for every 
element $f:(C,x_1,\ldots,x_{2d-1})\to Q$ of
$\overline \CC'_{Q,a}(\y)$, the curve $f(C)$ must be a nodal irreducible
rational curve. In other words, we have $C=\CP^1$ and $f$ is an immersion.

Let $\mathcal
M^*_{0,2d-1}(Q,(a,d-a))$ be the space of stable maps 
$f:(\CP^1,x_1,\ldots,x_{2d-1})\to Q$
from $\CP^1$ with $2d-1$ marked points, considered up to
reparametrisation, whose image  
has bidegree $(a,d-a)$.
Since $Q\simeq \CP^1\times\CP^1$ is convex, the   proof of {\cite[Lemma
    1.3]{Wel2}} implies that  a   point $f$ in 
$\mathcal M^*_{0,2d-1}(Q,(a,d-a))$ is regular
 for the
corresponding evaluation map if and only if it is  an immersion. This completes the proof.
\end{proof}

\begin{prop}\label{prop:balanced}
Regular values of $ev$ contained in $V_{2d}$ form
 a dense open subset $U_{2d}\subset V_{2d}$.
In particular, if 
$\x\in U_{2d}$, 
then the set $\overline\CC(\x)$
contains exactly $GW_{\CP^3}(d)$ elements.
\end{prop}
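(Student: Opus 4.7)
The plan is to find a non-empty Zariski open subset $U_{2d}\subset V_{2d}$ consisting of configurations $\x$ for which every curve in $\overline\CC(\x)$ is parametrised by a balanced immersion, and then to deduce regularity of $ev$ at such $\x$ from a standard cohomological computation on the normal bundle.

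To set up the generic locus, I would combine the three previous results. By Proposition~\ref{prop:kollar}, there is a Zariski open subset of $V_{2d}$ on which every element of $\overline\CC(\x)$ is an irreducible rational curve $C$, contained in one of the finitely many smooth quadrics $Q\in\Q$ produced by Equation~(\ref{equ:pencil}), and has bidegree $(a,d-a)$ with $a<d/2$. On each such $Q$, $C$ passes through $2d-1$ of the points of $\x$, so Proposition~\ref{prop:generic quadric} imposes a further Zariski open condition on $\x$ ensuring that $C$ is parametrised by an immersion $f:\CP^1\to Q\subset\CP^3$. Since $a\ne d-a$, Proposition~\ref{prop:balanced2} then gives $\N_\C\cong\mathcal O_{\CP^1}(2d-1)^{\oplus 2}$, i.e.\ $f$ is balanced. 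Let $U_{2d}$ denote the intersection of these Zariski open conditions; it remains to check that $ev$ is regular at each preimage of $\x\in U_{2d}$.

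For this, pick $x_1,\ldots,x_{2d}\in\CP^1$ with $f(x_i)\in\x$. Then $\N_\C(-\sum_i x_i)\cong\mathcal O_{\CP^1}(-1)^{\oplus 2}$, whose cohomology vanishes in every degree. The restriction exact sequence
\[
0\to\N_\C\Bigl(-\sum_i x_i\Bigr)\to\N_\C\to\bigoplus_i \N_\C|_{x_i}\to 0
\]
then yields surjectivity of the evaluation map $H^0(\N_\C)\to\bigoplus_i\N_\C|_{x_i}$. Combined with the variations of the marked points, which cover the tangential directions $df_{x_i}(T_{x_i}\CP^1)\subset T_{f(x_i)}\CP^3$ because $f$ is an immersion, this provides surjectivity of $d\,ev$ at $(f,x_1,\ldots,x_{2d})$. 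Hence $\x\in U_{2d}$ is a regular value of $ev$, and $|\overline\CC(\x)|$ equals $GW_{\CP^3}(d)$ by the very definition of the Gromov-Witten invariant. Since regular values of $ev$ form a Zariski open subset of $(\CP^3)^{2d}$ and we have just seen this subset meets the irreducible variety $V_{2d}$, the set $U_{2d}$ is both Zariski open and dense in $V_{2d}$.

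The main technical point I foresee is ensuring that Proposition~\ref{prop:generic quadric}, which concerns a \emph{fixed} quadric, can be applied uniformly as $\x$ varies over $V_{2d}$, since the relevant quadrics themselves depend on $\x$ via Equation~(\ref{equ:pencil}). I would address this by pulling back the open conditions of Proposition~\ref{prop:generic quadric} along the algebraic finite cover
\[
\{(\x,E,a)\in V_{2d}\times C_0\times \Z : 0\le a<d/2,\ (d-2a)E=(d-a)h-\x\}\longrightarrow V_{2d};
\]
the resulting condition on $\x$, being the complement of the image of a Zariski closed subset under a finite map, is itself Zariski open.
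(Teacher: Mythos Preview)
Your proof is correct and follows essentially the same strategy as the paper: combine Proposition~\ref{prop:kollar}, Proposition~\ref{prop:generic quadric}, and Proposition~\ref{prop:balanced2} to conclude that every curve in $\overline\CC(\x)$ is a balanced immersion, and then deduce regularity of $ev$. The paper cites \cite[Lemma~1.3]{Wel2} for the equivalence ``regular point of $ev$ $\Leftrightarrow$ balanced immersion'', whereas you spell out the cohomological computation with $\N_\C(-\sum x_i)\cong\mathcal O_{\CP^1}(-1)^{\oplus 2}$; these are the same argument.

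The one noteworthy difference is how the uniformity issue (Proposition~\ref{prop:generic quadric} is stated for a fixed quadric, but the relevant quadrics depend on $\x$) is handled. You resolve it via the finite cover $\{(\x,E,a):(d-2a)E=(d-a)h-\x\}\to V_{2d}$ and push forward the bad locus. The paper instead observes that the solutions of Equation~(\ref{equ:pencil}) depend only on the class $[\x]\in\Pic_{2d}(C_0)$; so one may first fix a generic divisor class, thereby freezing the finitely many quadrics, and then move $\x$ within its $(2d-1)$-dimensional linear system to land in the dense open subset $U_{2d-1}$ of Proposition~\ref{prop:generic quadric} for each of these quadrics simultaneously. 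Both arguments are valid; the paper's is slightly more conceptual, while yours makes the algebro-geometric bookkeeping more explicit.
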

\begin{proof}
By {\cite[Lemma 1.3]{Wel2}}, an element of $\mathcal
M^*_{0,2d}(\CP^3,d)$ is a regular point of $ev$ if and only if it is
a balanced immersion.
Let $\x$ be a configuration of $2d$ points on $C_0$ for which the
conclusions of Proposition \ref{prop:kollar} hold. 
Note that the quadrics,  solutions to Equation    $(\ref{equ:pencil})$,
do not change if~$\x$ is replaced by a configuration of points
linearly equivalent to $\x$ in $C_0$.
By the Riemann-Roch theorem, every effective divisor linearly equivalent to $\x$ in
$C_0$ is
determined by $2d-1$ points in $C_0$. Hence, according to Proposition~\ref{prop:generic quadric}, if $\x$ is chosen generically among all
configurations of $2d$ points in $C_0$ linearly equivalent to a given generic
divisor of degree $2d$, all elements in $\overline \CC(\x)$
will be immersions. 
Thus, we are left to show that
every element
$f:(\CP^1,x_1,\ldots,x_{2d})\to\CP^3$ of  $\overline \CC(\x)$ is
balanced, which follows from Proposition~\ref{prop:balanced2}.
\end{proof}

\noindent
 {\bf \emph{Proof of Theorem \ref{thm:kollar}.}} 
Let $U_{2d}$ be as in Proposition \ref{prop:balanced}. Choose $\x\in U_{2d}$ and $\y\subset\x$ a set of $2d-1$ points
in $\x$.
We denote by $\mathcal W_a $ the set of quadrics in $\Q$ corresponding to a
solution of Equation~$(\ref{equ:pencil})$. As explained above, this set
contains $(d-2a)^2$ elements.
By Proposition \ref{prop:kollar}, we know that the equality 
$$|\overline \CC(\x)|=\sum_{0\leq a<b} \ \sum_{Q\in \mathcal W_a}|\overline
 \CC'_{Q,a}(\y)|$$ 
holds. By Proposition
\ref{prop:balanced}, we have  $|\overline \CC(\x)|=GW_{\CP^3}(d)$.
Since $\x$ is a regular value of the map $ev$, every map in $\overline \CC(\x)$
is an immersion $f:\CP^1\to \CP^3$. 
Hence by Proposition \ref{prop:generic quadric}, we have $|\overline
\CC'_{Q,a}(\y)|=GW_{\CP^1\times\CP^1}(a,b)$.~ $\hfill\qed$\\

\noindent
 {\bf \emph{Proof of Theorem \ref{thm:main}.}}
Suppose now that $C_0$ is real, with $\R C_0\ne 0$, and   
$\x\in U_{2d}$ is a
real configuration of $2d$ points, containing at
least one real point.

Let $d$ be an odd positive integer and   $Q$ be a quadric
containing a real map $f\!:\!\CP^1\to\CP^3\in \overline\CC(\x)$. 
According to Proposition
\ref{prop:kollar}, there exists an integer
$a\in\{0,\ldots,\frac{d-1}{2} \}$ such that
$f(\CP^1)$ 
has bidegree either $(a,d-a)$ or $(d-a,a)$ in the positive basis of
$Q$. From Corollary
\ref{prop:W2-W3}, we deduce that
$$s_{\RP^3}(f(\CP^1))= \left\{\begin{array}{ll}
s_{\RP^1\times\RP^1}(f(\CP^1))\,,& \mbox{if $a$ is even,}
\\
\\  s_{\RP^1\times\RP^1}(f(\CP^1)) +1\,,& \mbox{if $a$ is odd.}

\end{array}\right.$$

 Thus, the total contribution to $W_{\RP^3}(d,l)$ of elements 
 of $\overline \CC(\x)$, whose image is contained in $Q$, is
$W_{\RP^1\times\RP^1}((a,d-a),l)$ if $a$ is even and 
$-W_{\RP^1\times\RP^1}((a,d-a),l)$ if $a$ odd.
Equation~$(\ref{equ:pencil})$ always has a real solution and 
two real solutions differ by a real torsion element of order $d-2a$.
Hence, Equation~$(\ref{equ:pencil})$ has
exactly
 $d-2a$ real
solutions for every $a\in\{0,\ldots,\frac{d-1}{2}\}$. Now the end of the proof is similar to the proof of Theorem 
\ref{thm:kollar}. $\hfill\qed$

\begin{rem}\label{rem:vanishing}
One can also prove the vanishing of  $W_{\RP^3}(d,l)$ for $d$ even and $l\le
d-1$ in this way. 
This is also how J. Koll\'ar exhibited configurations $\x$ for which
$ \overline \CC(\x)$ contains no real curves: 
if $h$ is on the pointed component of $\R C_0$ and $\x$ is on
the non-pointed component, then  Equation $(\ref{equ:pencil})$ has no
real solution.
\end{rem}

\section{Computations and further comments}\label{sec:compute}

We provide in Tables \ref{tab1} and \ref{tab2} 
the  values of $W_{\RP^3}(d,l)$ for small values of
$d$. The values $W_{\RP^3}(d,d)$ are taken from \cite{GeorZin13}.
Our values
of $W_{\RP^3}(d,0)$ agree with the ones computed in \cite{Br7} and \cite{Br8}.

\begin{table}[h]
$$\begin{array}{|c|r|r|r|r|r|r|r|r|r|}
\hline \mbox{\backslashbox{l}{d}} & 1&3&5&7&9 &11& 13 
\\ \hline
0 & 1 &  -1 & 45& -14589& 17756793 &-58445425017 
&426876362998821
\\\hline 1 & 1 & -1 & 29 & -6957 & 6717465 & -18318948633&
114201657733941
\\\hline 2 & & -1 & 17 & -3093 &2407365  & -5495423913 &
29447853240537
\\\hline 3 & & -1 & 9 & -1269& 812157 & -1571343273 & 7298043143697
\\\hline 4 & & & 5 & -477 &  256065 & -426170217 & 1732456594269
\\\hline 5 &  &  & 5 & -173 &   75281 &  -109136649 & 392521356477
\\\hline 6 &  &  & & -85  & 21165 & -26389305 & 84651531633
\\\hline 7 &  & &  & -85  &6165 & -6109369 &  17390628729
\\\hline 8 &  & &  &  & 1993 & -1401241 & 3432362709
\\\hline 9 &  & &  &  & 1993 & -336441 &  663105669 
\\\hline 10 & & &  &  & & -136457 & 129344841
\\\hline 11 & & &  &  & & -136457 &  27607073
\\\hline 12 & & &  &  & & & 3991693
\\\hline 13 & & &  &  & & & 3991693
\\\hline 
\end{array}$$
\caption{}
\label{tab1}
\end{table}

\begin{table}[h]
$$\begin{array}{|c|r|r|}
\hline \mbox{\backslashbox{l}{d}} &  15 &17 
\\ \hline
0 & -6061743911446054965 & 152244625648721441783409 
\\\hline 1 & -1414422922125979269 & 31497207519483035166897
\\\hline 2 & -319737783634469757 & 6337510847893018140813
\\\hline 3 & -69876860779936989 & 1238195460245786397189
\\\hline 4 & -14727767907263157 & 234469282186353521817
\\\hline 5 &  -2985647746084965 & 42946188374781866313
\\\hline 6 &  -580664589588189 & 7592707791183642453
\\\hline 7 &  -108170761670685 & 1293343577697132477
\\\hline 8 & -19320554509557 &  212071309052944257
\\\hline 9 &  -3327374698245 & 33506171960522913
\\\hline 10 & -558961586685 & 5121214631258589
\\\hline 11 & -93320976413 & 763120829396277
\\\hline 12 &  -16000904949 & 112222758491433
\\\hline 13 &  -2937725541 & 16596074817721
\\\hline 14 & -1580831965 & 2542297019941
\\\hline 15 &  -1580831965&  447392666733
\\\hline 16 &  & -129358296175
\\\hline 17 &  & -129358296175
\\\hline 
\end{array}$$
\caption{}
\label{tab2}
\end{table}

The tables show that the values of  the Welschinger invariants with 2 or 0 real points
 are the same in the computed degrees. This lead us to make the following conjecture.
\begin{conj}\label{prop:2-no real}
For every positive integer $d$, one has
$$W_{\R  P^3}(d,d-1)=W_{\R  P^3}(d,d).$$
\end{conj}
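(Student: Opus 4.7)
For even $d$, both sides vanish by Mikhalkin's theorem cited after Theorem \ref{thm:main}, so I focus on odd $d$. My strategy is a wall-crossing argument on $\CP^3$ interpolating between configurations of type $l=d-1$ and $l=d$. I consider a real $1$-parameter family of configurations $\x(t)=\{r_1(t),r_2(t),p_1,\bar p_1,\ldots,p_{d-1},\bar p_{d-1}\}$, where $(r_1(t),r_2(t))$ is a pair of distinct real points for $t<0$, collides at a common real point $r_0$ at $t=0$, and emerges as a pair of complex conjugate points for $t>0$; the remaining $2(d-1)$ points are held fixed as conjugate pairs. For $t<0$ one recovers $W_{\RP^3}(d,d-1)$ and for $t>0$ one recovers $W_{\RP^3}(d,d)$.

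The first step is to show that, away from $t=0$, the real rational degree-$d$ curves through $\x(t)$ vary in a smooth real cobordism in which each curve deforms uniquely with its Welschinger sign preserved. This uses the fact that for $t\ne 0$ small, the configurations $\x(t)$ are regular values of the evaluation map $ev$ (by Proposition \ref{prop:balanced}) and that the sign $s_{\RP^3}$ is locally constant on the smooth locus of $\mathcal{M}^*_{0,2d}(\CP^3,d)$. Consequently, the difference $W_{\RP^3}(d,d)-W_{\RP^3}(d,d-1)$ equals a signed count of exceptional real rational curves at $t=0$, namely real rational degree-$d$ stable maps whose image has a double point at $r_0$ and that pass through the fixed $2d-2$ conjugate points.

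The second step is a dimension count on this exceptional locus. Requiring the image of a generic stable map $f\colon\CP^1\to\CP^3$ of degree $d$ to have a prescribed double point at $r_0\in\RP^3$ imposes codimension $5$ in $\overline{\mathcal{M}}_{0,0}(\CP^3,d)$ (codimension $3$ for passage through $r_0$, plus codimension $2$ for a second branch at $r_0$), and passage through $2d-2$ additional points imposes codimension $3(2d-2)=6d-6$. Starting from the expected dimension $4d$, this yields virtual dimension $4d-5-(6d-6)=-2d+1<0$ for all $d\geq 1$, so the exceptional locus is generically empty and the wall contribution vanishes.

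The main obstacle is to rule out hidden contributions from boundary strata of $\overline{\mathcal{M}}_{0,2d}(\CP^3,d)$ at $t=0$: bubbling phenomena, reducible stable maps with a component contracted to $r_0$, or non-immersive maps. Each such stratum should be shown to have negative virtual dimension under the prescribed point constraints, by a stratification analogous to the transversality arguments of Section \ref{sec:proof}. As an alternative route, one could attempt to extend Theorem \ref{thm:main} itself to $l=d$ using a pencil of quadrics through $\x\subset C_0$ with $d$ conjugate pairs; the analogous surface-level wall-crossing on each real quadric of the pencil is simpler and could be addressed via Itenberg-Kharlamov-Shustin-type formulas for $\CP^1\times\CP^1$.
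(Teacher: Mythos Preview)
First, note that this statement is a \emph{conjecture} in the paper, not a theorem: the authors present it as an observation from the numerical tables and offer no proof. So there is no ``paper's own proof'' to compare against; any argument you give would be new.

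Your proposed argument, however, has a fatal error in the dimension count. Passage of a curve through a fixed point of $\CP^3$ is a codimension $2$ condition on $\overline{\mathcal M}_{0,0}(\CP^3,d)$, not codimension $3$: the ambient space has dimension $3$ but the image is a curve, so the incidence condition cuts down by $3-1=2$. Likewise, each of the $2d-2$ additional point constraints is codimension $2$, not $3$. With the corrected numbers, the locus of maps whose image has a node at the prescribed point $r_0$ has codimension $4$ (two branches through $r_0$, each codimension $2$), and the remaining $2d-2$ point constraints add codimension $4d-4$. Starting from $\dim\overline{\mathcal M}_{0,0}(\CP^3,d)=4d$, the exceptional locus has expected dimension
\[
4d - 4 - (4d-4) = 0,
\]
not $-2d+1$. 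Thus the wall locus is generically a finite set of curves, and there is no reason for the contribution to vanish on dimensional grounds.

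Indeed, the tables themselves show that $W_{\RP^3}(d,l)\ne W_{\RP^3}(d,l+1)$ for most values of $l$, so the wall-crossing between consecutive values of $l$ is genuinely nonzero in general. The conjecture asserts that the specific wall between $l=d-1$ and $l=d$ contributes zero, which must reflect some special cancellation in the signed count of nodal curves, not emptiness of the locus. A proof would have to identify that cancellation mechanism; your outline does not. The alternative route you mention at the end---extending Theorem~\ref{thm:main} to $l=d$---also runs into difficulty: when all $2d$ points are non-real, there is no real point to fix on $C_0$, and the counting of real solutions to Equation~(\ref{equ:pencil}) changes character.
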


\noindent
From the first values of $W_{\RP^3}(d,l)$, it might be tempting to
conjecture
that 
$$(-1)^{k} W_{\R  P^3}(2k+1,l)\ge
(-1)^{k} W_{\R  P^3}(2k+1,l+1)\ge 0.$$
However, both inequalities turn out not to hold in general, starting in degrees 17
and 19:
$$W_{\R  P^3}(17,16)<0\quad\mbox{and}\quad  
-W_{\R  P^3}(19,17)=74131154312945<106335656443537 = -W_{\R  P^3}(19,18).$$
In fact, the sign of $W_{\R  P^3}$ seems to obey  an
analogous  rule as the one observed in \cite{Bru14} for
the Welschinger invariants of $\RP^2$:
as $l$ goes from $0$ to $d-1$, the numbers $(-1)^kW_{\RP^3}(2k+1,l)$ are
first positive and then, starting from some mysterious threshold,
have an alternating sign. It would be interesting to investigate what
governs the sign of the Welschinger invariants.

\medskip
We end this paper by establishing a congruence modulo 4 between
the Welschinger and the Gromov-Witten invariants, generalising results from  
\cite{Br7} and \cite{GeorZin13}.
\begin{prop}\label{prop:mod W2}
For every positive integers $a$, $b$, and $l\in\{0,\ldots, a+b-1\}$, one has
$$GW_{\C P^1\times\CP^1}(a,b)= W_{\R P^1\times\RP^1}((a,b),l) \mod 4. $$
\end{prop}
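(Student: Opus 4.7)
The plan is to reduce the congruence to a parity statement and then verify that parity.

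\textbf{Reduction.} Fix a generic real configuration $\x$ of $2(a+b)-1$ points with exactly $l$ complex conjugate pairs. Under complex conjugation, the set $\CC(\x)$ of complex rational curves of bidegree $(a,b)$ through $\x$ decomposes into real curves (fixed points) and complex conjugate pairs (free orbits of size two). Let $R_+$ (respectively $R_-$) be the number of real curves with Welschinger sign $+1$ (respectively $-1$), and let $P$ be the number of complex conjugate pairs. Then
\[
GW_{\C P^1\times\C P^1}(a,b)=R_++R_-+2P, \qquad W_{\R P^1\times\R P^1}\bigl((a,b),l\bigr)=R_+-R_-,
\]
so $GW-W=2(R_-+P)$. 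Since both $GW$ and $W$ are enumerative invariants, the integer $R_-+P$ depends only on $(a,b,l)$, and the proposition becomes the parity statement $R_-+P\equiv 0 \pmod 2$.

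\textbf{Establishing parity.} I would combine a wall-crossing argument with a base-case computation. Connecting two real configurations with different values of $l$ by a generic real path in the space of $(2(a+b)-1)$-tuples of points, one encounters isolated codimension-one walls at which the triple $(R_+,R_-,P)$ can jump: a real curve degenerates (acquires a cusp or becomes reducible), two real curves collide into a conjugate pair, a real node switches between elliptic and hyperbolic type, or two real points of the configuration merge and re-emerge as a complex conjugate pair (the only wall at which $l$ changes). At every wall that preserves $l$, Welschinger's invariance of $W$ together with invariance of $GW$ forces $\Delta(R_-+P)=0$ automatically. At a wall where $l$ changes by one, a local analysis---using the Gauss-map interpretation of the Welschinger sign provided by Lemma~\ref{lem:w2} and the pencil-of-quadrics viewpoint of Section~\ref{sec:quadric}---should show that $\Delta(R_-+P)$ is even, so that the parity of $R_-+P$ is independent of $l$. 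The argument then reduces to a tractable base case, for instance $(a,b)=(1,1)$: the unique $(1,1)$-curve through any three generic real points is nodeless, hence $GW=W=1$ and $R_-+P=0$, which is even.

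\textbf{Main obstacle.} The most delicate step is the wall-crossing analysis at the $l$-changing wall: when two real points of $\x_t$ merge and then reappear as a complex conjugate pair, one must describe locally the family of rational curves of bidegree $(a,b)$ through $\x_t$ and confirm that the resulting net change in $R_-$ balances the change in $P$ modulo $2$. An alternative, perhaps cleaner, route is to specialise $\x$ onto a real $(2,2)$-elliptic curve $C_0\subset\CP^1\times\CP^1$ and apply a surface analogue of Proposition~\ref{prop:kollar}, parametrising $\CC(\x)$ by the solutions of a torsion equation in $\Pic(C_0)$; complex conjugation permutes these solutions, and the Welschinger sign of each real solution can be read off via Lemma~\ref{lem:w2}, reducing the parity claim to a group-theoretic statement about the action of conjugation on $(\Z/d\Z)^2$-torsors. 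Either way, the core difficulty is the careful sign bookkeeping that makes $R_-+P$ even.
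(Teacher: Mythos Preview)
Your reduction to the parity statement $R_-+P\equiv 0\pmod 2$ is correct. However, the remainder of the argument has two genuine gaps.

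First, the wall-crossing analysis at the $l$-changing wall is asserted but not carried out; you yourself flag it as the main obstacle. Neither Lemma~\ref{lem:w2} nor Section~\ref{sec:quadric} gives direct control over how $R_-$ and $P$ jump when two real points of $\x$ collide and emerge as a conjugate pair, and the difference $W_{\RP^1\times\RP^1}((a,b),l)-W_{\RP^1\times\RP^1}((a,b),l+1)$ is not obviously divisible by $4$. The alternative elliptic-curve route you sketch is equally incomplete: unlike the threefold situation of Proposition~\ref{prop:kollar}, placing $\x$ on a $(2,2)$-curve $C_0\subset\CP^1\times\CP^1$ does not force the rational curves through $\x$ into a pencil, so the count does not reduce to a torsion equation in $\Pic(C_0)$.

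Second, even granting that the parity of $R_-+P$ is independent of $l$, you would still need a base case \emph{for every} bidegree $(a,b)$, not just for $(1,1)$. Verifying the congruence at a single $l$ for general $(a,b)$ is the original statement at that $l$; the $(1,1)$ check does not launch an induction because you set up no recursion in $(a,b)$.

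The paper avoids both difficulties by a completely different route. Both $GW_{\CP^1\times\CP^1}(a,b)$ and $W_{\RP^1\times\RP^1}((a,b),l)$ are computed as sums over the \emph{same} set of marked floor diagrams, with Newton polygon the rectangle on $(0,0)$, $(a,0)$, $(0,b)$, $(a,b)$. For this rectangle every floor has divergence $0$, so all real multiplicities are non-negative; since the complex multiplicity and the absolute value of the real multiplicity of any marked floor diagram always agree modulo $4$, the two invariants agree term by term modulo $4$. This handles all $(a,b)$ and all $l$ at once, with no wall-crossing and no base case.
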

\begin{proof}
 Both invariants can be computed via the
enumeration of marked floor diagrams (see \cite{Br6b})
with  Newton polygon the rectangle with
vertices
$$(0,0), \ (a,0), \ (0,b), \ \mbox{and}\ (a,b). $$
Any floor diagram with this Newton polygon has only
floors with divergence 0. Hence, every marked floor diagram
contributing to 
$GW_{\C P^1\times\CP^1}(a,b)$    has only  non-negative real multiplicities. 
Now the result follows from the fact that the absolute value of a real
multiplicity of a marked floor diagram is always equal modulo 4 to its
complex multiplicity.
\end{proof}

\begin{cor}
For every  positive integers $d$ and $l\in\{0,\ldots,d\}$, one has
$$GW_{\C P^3}(d)= (-1)^{\frac{(d-1)(d-2)}2} W_{\R P^3}(d,l) \mod 4.$$
\end{cor}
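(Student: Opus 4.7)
The plan is to combine Proposition~\ref{prop:mod W2} with Theorems~\ref{thm:main} and~\ref{thm:kollar}, treating the two parities of $d$ separately.

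For $d$ even, I would argue that both sides are $\equiv 0 \pmod 4$. Indeed, Theorem~\ref{thm:kollar} expresses $GW_{\C P^3}(d)$ as a sum of terms $(d-2a)^2\, GW_{\C P^1\times\C P^1}(a,b)$ with $0\le a<b$ and $a+b=d$; in this range $d-2a$ is a nonzero even integer, so $(d-2a)^2\equiv 0\pmod 4$ and hence $GW_{\C P^3}(d)\equiv 0\pmod 4$. On the other hand, the Mikhalkin vanishing recalled in Remark~\ref{rem:vanishing} gives $W_{\R P^3}(d,l) = 0$ for $l\le d-1$ whenever $d$ is even, so the congruence holds trivially.

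For $d$ odd, I would first plug Proposition~\ref{prop:mod W2} into Theorem~\ref{thm:kollar} to obtain
$$
GW_{\C P^3}(d) \;\equiv\; \sum_{\substack{a+b=d\\ 0\le a<b}} (d-2a)^2\, W_{\R P^1\times\R P^1}\bigl((a,b),l\bigr) \pmod 4,
$$
and then compare with Theorem~\ref{thm:main}. This reduces the corollary to the purely numerical congruence
$$
(d-2a)^2 \;\equiv\; (-1)^{(d-1)(d-2)/2}\,(-1)^a\,(d-2a) \pmod 4
$$
for every $a$ with $0\le a<d/2$. To verify it, I would set $d=2k+1$, so that $(d-1)(d-2)/2 = k(2k-1)$ has the parity of $k$, and $d-2a = 2(k-a)+1$ is odd. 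Then $(d-2a)^2\equiv 1\pmod 4$, and a short case analysis on whether $k\equiv a\pmod 2$ or not shows that $(-1)^a(d-2a)\equiv (-1)^k \pmod 4$ in either case; multiplying by the sign $(-1)^{(d-1)(d-2)/2}=(-1)^k$ then gives $1\pmod 4$, matching $(d-2a)^2$.

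I do not expect any serious obstacle: once the three earlier results are assembled, the proof is essentially a parity bookkeeping exercise. The only mild subtlety concerns the edge case $l=d$, which lies outside the range $l\le a+b-1 = d-1$ in which Proposition~\ref{prop:mod W2} is stated; should this case be required, it can be extracted separately from the explicit values of $W_{\R P^3}(d,d)$ computed in \cite{GeorZin13}.
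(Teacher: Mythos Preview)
Your proposal is correct and follows essentially the same route as the paper: for $d$ even you use Theorem~\ref{thm:kollar} together with the vanishing of $W_{\R P^3}(d,l)$, and for $d$ odd you combine Theorems~\ref{thm:main} and~\ref{thm:kollar} with Proposition~\ref{prop:mod W2}, reducing everything to the elementary congruence $(d-2a)^2\equiv(-1)^{(d-1)(d-2)/2}(-1)^a(d-2a)\pmod 4$. The only cosmetic difference is that the paper dispatches this last congruence via the identity $(-1)^u\equiv 2u+1\pmod 4$ rather than your case split on the parity of $k-a$; your observation about the edge case $l=d$ (which lies outside the stated range of both Theorem~\ref{thm:main} and Proposition~\ref{prop:mod W2}) is well taken and is glossed over in the paper as well.
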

\begin{proof}
When $d$ is even, this follows from Theorem \ref{thm:kollar} and the
vanishing of $W_{\R P^3}(d,l) $. 
When $d$ is odd, this follows from Theorem \ref{thm:main}, Proposition \ref{prop:mod W2}, 
and the congruence $(-1)^{u}=2u+1 \mod 4 $.
\end{proof}
In \cite{Br7} it was  also shown that the inequality $(-1)^{\frac{(d-1)(d-2)}2} W_{\R
  P^3}(d,0)\ge 0$   holds for all  $d$. It is not clear how to deduce this   inequality
from Theorem \ref{thm:main}.

\bibliographystyle {alpha}
\bibliography {Biblio.bib}

\end{document}